\documentclass[a4paper,12pt,reqno]{amsart}
\usepackage{amsmath,amsthm,amsfonts,amssymb,mathtools}
\usepackage{lmodern}
\usepackage{csquotes}
\usepackage{a4wide}
\usepackage{tikz}
\usepackage{float}
\usepackage{caption}
\usepackage{indentfirst}
\usepackage{comment}
\usepackage{shortmathj}

\usepackage[style=numeric,citestyle=numeric-comp,giveninits=true,maxbibnames=9,maxcitenames=2,useprefix=true]{biblatex}
\renewbibmacro{in:}{%
  \ifentrytype{article}{%
  }{%
    \printtext{\bibstring{in}\intitlepunct}%
  }%
}
\DeclareFieldFormat[article,inbook,incollection,inproceedings,patent,thesis,unpublished]{title}{#1\isdot} 
\DeclareFieldFormat[article,periodical]{pages}{#1} 

\renewbibmacro*{volume+number+eid}{%
  \printfield{volume}%
  \setunit*{\addnbspace}
  \printfield{number}%
  \setunit{\addcomma\space}%
  \printfield{eid}}
\DeclareFieldFormat[article]{number}{\mkbibparens{#1}}

\DeclareFieldFormat[article]{titlecase}{\shortifyAMSjournalname{#1}}

\usepackage[english]{babel}
\bibliography{Bib-QSD-Rumor}

\usepackage{graphicx}
\usepackage[colorlinks=true, allcolors=blue]{hyperref}
\urlstyle{same}

\theoremstyle{plain}
\newtheorem{thm}{Theorem}[section]

\newtheorem{lem}[thm]{Lemma}

\theoremstyle{definition}
\newtheorem{defn}[thm]{Definition}

\theoremstyle{remark}
\newtheorem*{obs}{Remark}

\numberwithin{equation}{section}
\numberwithin{figure}{section}
\numberwithin{table}{section}

\captionsetup{font=small,labelfont=bf,format=hang}

\newcommand{\bbZ}{\mathbb{Z}}
\newcommand{\QQ}{\boldsymbol{Q}}
\newcommand{\PP}{\boldsymbol{P}}

\newcommand{\xx}{{\overline{x}}}

\newcommand{\dado}{\ensuremath{{\,}|{\,}}}
\newcommand{\qso}{\nu^{\ast}}
\newcommand{\qsm}{\tilde{\nu}}

\newcommand{\qsir}{\hat{\nu}}
\newcommand{\tabs}{\tau_{\Delta}}
\newcommand{\tao}{\tau^{\ast}}
\newcommand{\tam}{\tilde{\tau}}

\newcommand{\tasir}{\hat{\tau}}
\newcommand{\ctm}{\tilde{S}}
\newcommand{\ctmmt}{S^*}
\newcommand{\ctmsir}{\hat{S}}
\newcommand{\mcX}{X}
\newcommand{\cam}{\gamma}
\newcommand{\ccam}{\Gamma_{(x, y)}}

\newcommand{\ctran}{\mathcal{E}}

\DeclareMathOperator{\card}{card}
\DeclareMathOperator{\DExp}{Exp}
\DeclareMathOperator{\DNormal}{Normal}

\definecolor{bordo}{rgb}{0.8,0,0.3}
\definecolor{marinho}{rgb}{0, 0, 12.5}

\begin{document}

\title[QSDs for rumor models]{On quasi-stationary distributions for stochastic rumor models}

\author[I. Ben-Ari]{Iddo Ben-Ari}
\author[E. Lebensztayn]{Elcio Lebensztayn}
\author[L. S. Santos]{Lucas Sousa Santos}

\address[I. Ben-Ari]{Department of Mathematics, University of Connecticut, Storrs, CT 06269-1009, USA.}
\email{iddo.ben-ari@uconn.edu}

\address[E. Lebensztayn, L. S. Santos]{Instituto de Matem\'atica, Estat\'istica e Computa\c{c}\~ao Cient\'ifica, Universidade Estadual de Campinas (UNICAMP), CEP 13083-859, Campinas, SP, Brasil.}
\email{lebensz@unicamp.br, l265629@dac.unicamp.br}

\thanks{This study was financed, in part, by the S\~ao Paulo Research Foundation (FAPESP), Brazil. Process Number \#2023/13453-5.
Thanks are also due to the Coordination for the Improvement of Higher Education Personnel -- CAPES, Brazil.}

\keywords{Quasi-Stationary Distributions, Stochastic rumor, Daley--Kendall, Maki--Thompson, SIR epidemic model.}
\subjclass[2020]{60F99, 60J27, 60J50.}
\date{\today}

\begin{abstract}
This paper examines the quasi-stationary behavior of stochastic rumor processes.
Using the results by \textcite{van08}, we first prove that the continuous-time Maki--Thompson model has a unique quasi-stationary distribution (QSD) given by the point mass at the state \((0, 1)\).
To obtain a non-trivial QSD, we modify the absorption set by conditioning the process on not returning to the level \(y=1\) after leaving the initial state \((N, 1)\).
For this modified model, we establish the existence and uniqueness of a non-trivial QSD that assigns positive probability to all transient states, and then derive an explicit formula for this QSD in terms of paths and transition rates.
We also discuss the ratio of expectations distribution as an alternative approach to describe the long-term behavior before absorption.
The analysis is further extended to the Daley--Kendall rumor model and the stochastic SIR epidemic model.
\end{abstract}

\maketitle

\section{Introduction}

This work presents results about the quasi-stationary behavior of stochastic rumor processes. These objects have been studied for decades in several aspects, with their foundational results published at a similar period by \textcite{darroch65,darroch67} and \textcite{daley65}, respectively.

Rumor processes were initially developed as a modification of epidemic processes, attempting to describe the spread of information through a population. The dynamics is similar to the famous SIR (Susceptible-Infected-Recovered) epidemic model, but the way an individual is removed from the infected class differs, depending on a direct interaction between individuals. This distinguishes both systems significantly, making the rumor process a distinct entity from epidemics.
Our main interest is to study the continuous-time version of the rumor model proposed by \textcite{maki73}.
We present an overview of the definitions and references about stochastic rumor systems in Section~\ref{S: Rumor}.
The interested reader is referred to \textcite[Chapter~5]{daley99} for further details.

As for quasi-stationary distributions (QSDs), the study was initiated from the necessity of understanding the long-time behavior of absorbing processes. The theory about stationary measures is not very useful for processes where a state (or a set of states) is absorbing, such as a subcritical Bienaymé--Galton--Watson branching process. Thus, finding a limit measure for the process conditioned on not being killed gives insights into a stable behavior before the dynamic finishes.
Section~\ref{S: QSD} contains a brief exposition on QSDs and some preparatory material.
A comprehensive treatment of this subject can be found in \textcite{collet13}.

Although those two subjects are not new, to our knowledge, no attention has been given to quasi-stationarity for rumor processes. We think that the first reason for this could be the fact that epidemic models are quite popular, so the efforts would naturally be put in this field. Indeed, discussions about QSDs for these models can be found, even though many are computational applications or approximations.

The second reason is possibly the fact that most results for QSDs deal with irreducible processes, as irreducibility plays an important role in guaranteeing the existence and uniqueness of the measure. Nevertheless, as we will see in Section~\ref{S: QS Rumor}, well-known results about QSDs in reducible chains allow us to show that the QSD for the Maki--Thompson rumor model has a trivial form, in the sense that all the probability mass is concentrated at a single state. 
Therefore, to proceed with the study of the existence of a non-trivial QSD for this process, we modify the absorption set.
Hence, for the modified model, we establish the existence and uniqueness of a QSD that is not concentrated at a single state and present an explicit formula for this QSD.
Also in Section~\ref{S: QS Rumor}, we study the Maki--Thompson model applying the ratio of expectations approach, another measure introduced by \textcite{darroch65,darroch67} to describe the long-term behavior of absorbing processes. 
In Section~\ref{S: SIR}, the discussion is extended to the stochastic SIR epidemic model.

\section{Rumor models}
\label{S: Rumor}

Rumor processes as treated here were first formally defined in the work of \textcite{daley65}, where the authors formulated a stochastic model for the spreading of a rumor on a finite homogeneous population. For this process, three types of mutually exclusive classes of individuals are distinguished: ignorants, spreaders, and stiflers, composing a population of size $N+1$. The dynamics occurs in the following way:
\begin{itemize}
\item Ignorant individuals do not know the rumor and turn into spreaders whenever they receive the rumor from spreaders;

\item Spreader individuals transmit the rumor until they have a frustrating experience of contacting another individual who already knows the rumor, when they turn into stiflers;

\item Stifler individuals know the rumor, but do not pass it on.
\end{itemize}
As usual, we denote the number of individuals in each category at time $t$ by $X_t$, $Y_t$, and $Z_t$, respectively.
Notice that $X_t+Y_t+Z_t = N+1$ for all $t \geq 0$; therefore, it is sufficient to know two of the variables to describe the process. Suppose that the initial values are $X_0 = N$, $Y_0 = 1$, $Z_0=0$. 
To simplify the writing, the dependence on $N$ of the random variables is omitted.

Of course, there is a natural comparison between rumor processes and epidemic models, relating ignorant to susceptible individuals, spreader to infected, and stifler to removed. 
However, in epidemics, each infected individual ceases to spread the disease according to an independent recovery.
By contrast, rumor models assume that a spreader individual continues to disseminate the rumor until interacting with another informed individual.
Upon this interaction, the individual transmitting the rumor recognizes no further incentive to continue dissemination.

In the model proposed by \textcite{daley65}, which we shall abbreviate as the DK model, people interact by pairwise contacts.
The only significant interactions involve one or two spreaders; moreover, when two such individuals meet, the encounter results in both transitioning to the stifler state.
Thus, the DK model is described by a continuous-time Markov chain $\{(X_t, Y_t)\}_{t\geq0}$ with the following transitions:
\begin{equation}
\label{F: DK Rates}
{\allowdisplaybreaks
\begin{array}{cc}
\text{transition} \quad &\text{rate}\\[0.1cm]
(-1, 1) \quad &X Y,\\[0.1cm]
(0, -1) \quad &Y (N + 1 - X - Y),\\[0.1cm]
(0, -2) \quad &\displaystyle\binom{Y}{2}.
\end{array}}%
\end{equation}
In the first transition, a spreader communicates the rumor to an ignorant individual, and the latter becomes a spreader.
The other two transitions happen when a spreader meets a stifler or two spreaders meet, respectively. 
In both cases, the interaction results in the involved spreader(s) becoming stifler(s).
The mechanism is that each spreader loses its motivation to further transmit the rumor once it becomes evident that the interlocutor is already informed.

\textcite{daley65} mainly focus on the ultimate number of persons learning the rumor. Using approximations from differential equations, they find that the proportion of individuals who know that rumor is approximately $0.2032$. 
Ignoring the intervals between transitions, they study the embedded random walk and write its forward equations, but solve them only for some values of $N$.
They also discuss two natural generalizations of the rumor process.
In the first model, there is a positive probability $p$ that a spreader tells the rumor to another individual at each interaction. In addition, each spreader in a meeting with someone already informed has a probability $\alpha$ of becoming a stifler.
In the second variant of the DK model, the authors suppose that each spreader needs exactly $k \geq 1$ frustrating encounters to stop transmitting the rumor (a case known as $k$\emph{-fold stifling model}).

A major part of the literature on stochastic rumor models concerns a process formulated by \textcite{maki73} as a discrete-time simplified version of the DK model.
In the model proposed by Maki and Thompson, or MT model for brevity, the spread of the rumor occurs via directed contacts between spreaders and other people. 
Furthermore, when a spreader calls another spreader, only the one who initiated the contact turns into a stifler. 
Considering the continuous-time process, the MT model is described by the Markov chain \(\{(X_t, Y_t)\}_{t \geq 0}\)
with transition rates
\begin{equation}
\label{F: MT Rates}
\begin{array}{cc}
\text{transition} \quad &\text{rate}\\[0.2cm]
(-1, 1) \quad &X Y,\\[0.2cm]
(0, -1) \quad &Y (N - X).
\end{array}
\end{equation}
In the first case, an ignorant person hears the rumor from a spreader and begins to spread it.
The second transition happens when a spreader meets another spreader or a stifler. At this point, the initiating spreader loses interest in sharing the rumor and becomes a stifler.
Despite discussing the fact that rumors had been studied before and pointing out their similarities with epidemics, \textcite{maki73} do not cite the work of \textcite{daley65}, only proposing a version of the model in which when a spreader calls another spreader, both become stiflers \cite[Section~9.4.3]{maki73}.

The treatment given in \textcite{maki73} is based on calculating the conditional expectation of $X$ and $Y$ given the past state. Then, the authors use an iterative method to approximate the expected value of ignorants at the end of the process and its mean duration, obtaining the values $0.238 N$ and $1.594 N$, respectively. 
Later, by using martingale techniques, \textcite{sudbury85} established a Law of Large Numbers for the MT model, proving that, as $N \to \infty$, the ultimate proportion of individuals who remain ignorant converges in probability to 
$0.203$, hence correcting the first value.
With the aid of martingales too, \textcite{lefevre94} deduce the exact joint distribution of the final number of ignorants and the total area under the trajectory of spreaders.
For the \(k\)-fold stifling MT model, a Law of Large Numbers is derived by \textcite{duan21} and by \textcite{lebensztayn-santos25}.
Recently, for general stochastic rumor models, the asymptotic behavior of the maximum proportion of spreader individuals throughout the process is studied in \textcite{lebensztayn-rodriguez25,coletti25}.

Regarding the description of the stochastic fluctuations around the deterministic limit, an approximation method used in \textcite{daley65} is named by the authors as the \emph{Principle of the Diffusion of Arbitrary Constants}, a diffusion approximation for Markov population processes detailed later by \textcite{barbour72}. 
Using martingales, under the condition that the initial proportion of spreaders is not negligible, \textcite{watson88} proves a Central Limit Theorem for the terminal proportion of ignorants, when suitably rescaled, for both DK and MT models. \textcite{pittel90} generalizes this result for the DK model, allowing the case $Y_0 = 1$.
Further general limit theorems for extensions of DK and MT models are established by \textcite{lebensztayn11a,lebensztayn11b,rada21}.

Due the fact that a rumor process is \emph{strictly evolutionary} (in the sense of not returning to a previously visited state) and has a finite state space, it is possible to find $p_{i,j}=P\{(X_t, Y_t)= (i,j) \dado (X_0=N, Y_0=1)\}$.
This is done by enumerating the set of paths that connect the initial state $(N,1)$ to $(i,j)$ and using probability generating functions. 
More details on this approach can be found in \textcite[Section~5.5]{daley99}. A solution for the MT model using this strategy is presented in \textcite{gani00}; however, as discussed in the paper, the explicit calculations are not feasible even for a small value of~$N$, leading again to approximations.
In \textcite{lebensztayn15}, the exact distribution of the final number of ignorants for the MT model is obtained by analyzing the embedded chain of the process.
This distribution is given in terms of the number of certain deterministic finite automata.
In addition, the author establishes a Large Deviations Principle for the corresponding proportion, with an explicit formula for the rate function.

\section{Quasi-stationary distributions}
\label{S: QSD}

Quasi-stationary distributions (QSDs) have been extensively studied since the works of \textcite{wright31} on gene frequency in finite populations and \textcite{yaglom47} on subcritical branching process. This field has become popular due to the necessity of analyzing ``stationarity'' for absorbing processes. The term \emph{quasi-stationary distribution} was introduced by \textcite{bartlett60}, who discussed that the absorption time for a birth-and-death chain may be long enough to be relevant to consider the long-time behavior.

The key idea is that if a process is absorbed almost surely, then a stationary measure will give no information about a possible stable behavior.
Although the process will ultimately be absorbed, it may persist in a quasi-stationary state for an extended period. The quasi-stationary distribution characterizes the behavior of the process during this time.
The process is then conditioned on not being absorbed up to time $t>0$, and we study the distribution of the trajectories that survive the longest.

Let $\{\mcX_t\}_{t \geq 0}$ be a continuous-time Markov chain with countable state space $S^{\prime}$. 
We suppose that $S^{\prime}$ can be partitioned as $S^{\prime}=S\cup \Delta$, where $S$ is a transient class and $\Delta$ is an absorption set.
We define the hitting (killing) time
\begin{equation*}
    \tabs = \inf\{t>0: \mcX_t \in \Delta\}.
\end{equation*}
As usual, for any probability measure $\nu$ on $S$, we denote by $P_{\nu}$ and $E_{\nu}$ the probability and the expectation associated with the process $\mcX$ under the initial distribution $\nu$.
For any $x \in S$, we write $P_{x} = P_{\delta_x}$ and $E_{x} = E_{\delta_x}$.

\begin{defn}
\label{D: QSD}
Let $\nu$ be a probability measure on $S$.
We say that $\nu$ is a \textbf{quasi-stationary distribution} (QSD) if for all $t\geq0$ and any $A \subset S$,
\begin{equation*}
 P_{\nu}(X_t \in A \dado \tabs > t) = \nu(A).
\end{equation*}
\end{defn}

\begin{defn}
\label{D: QLD}
Let $\nu$ be a probability measure on $S$.
We say that $\nu$ is a \textbf{quasi-limiting distribution} (QLD) or Yaglom limit (after \textcite{yaglom47}) if there exists a probability measure $\mu$ on $S$ such that, for any $A \subset S$,
\begin{equation*}
 \lim_{t \to \infty} P_{\mu}(\mcX_t \in A \dado \tabs > t) = \nu(A).
\end{equation*}
\end{defn}

\textcite[Proposition 1]{meleard12} establish that $\nu$ is a QSD if and only if it is a QLD. Then, QSDs are necessarily limit measures. We may have one, infinity, or no QSD associated with a process.

The first results about this subject were obtained for chains with $S$ irreducible.  \textcite{darroch65,darroch67} use Perron--Frobenius theory to prove that there exists a unique QSD for finite state processes.
The discussion on countable state spaces was introduced by \textcite{seneta66}. Very little has been published about general state space processes. The literature on QSDs is extensive, and several results were obtained for continuous-time processes.
However, as far as we know, there are no references dealing with QSDs for rumor models.
Our work is a contribution to the efforts to understand this topic; in the direction of further research, we cite \textcite{collet13,van13,villemonais14}.

For finite state irreducible Markov chains, by the Perron--Frobenius theorem (see \textcite{serre10}), there is a unique eigenvalue with maximal real part $-\lambda$ for the rate matrix $\QQ$, associated with the eigenvectors $\nu$ and $\psi$, such that these are strictly positive, with $\nu$ assumed to be normalized. 
Writing $p_t(i,j) = P_i(\mcX_t=j)$, we recall that the matrix $\PP(t) = (p_t(i,j), i, j \in S)$ satisfies
\[ \PP(t) = e^{\QQ t} = \sum_{n\geq0}\frac{\QQ^n t^n}{n!}. \]
Hence,
\[ \nu\PP(t) = e^{-\lambda t}\nu, \, t\geq0. \]
The number $\lambda$ is called the \emph{decay parameter} of the QSD, because the killing time $\tabs$ follows an exponential distribution with parameter $\lambda$.
\begin{thm}[Exponential killing time]
    If $\nu$ is a QSD, then there exists $\lambda(\nu) \geq 0$ such that
$$P_\nu(\tabs>t)=e^{-\lambda(\nu)t} \geq 0, \forall t\geq0$$ i.e., starting from $\nu$, $\tabs$ is exponentially distributed with parameter $\lambda(\nu)$.
\end{thm}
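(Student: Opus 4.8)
The plan is to show that the survival function $g(t) := P_\nu(\tabs > t)$ obeys the multiplicative Cauchy equation $g(s+t) = g(s)\,g(t)$ for all $s,t \geq 0$, and then to use monotonicity to force $g$ to be an exponential. The only input beyond elementary measure theory is the defining property of a QSD (Definition~\ref{D: QSD}) together with the Markov property.

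First I would record a few elementary facts about $g$. Since $\nu$ is supported on the transient class $S$, we have $\mcX_0 \in S$ $P_\nu$-a.s.; as $t \downarrow 0$ the events $\{\tabs > t\}$ increase to $\{\tabs > 0\}$, and by right-continuity of the paths the chain stays in $S$ for a positive length of time, so $\tabs > 0$ a.s. and $g(0) = 1$. The sets $\{\tabs > s+t\}$ are contained in $\{\tabs > s\}$, so $g$ is non-increasing. Finally, $g(t) > 0$ for every $t$: this is built into Definition~\ref{D: QSD}, whose conditional probabilities $P_\nu(\cdot \mid \tabs > t)$ require $P_\nu(\tabs > t) > 0$ (alternatively, $g(t_0) = 0$ would propagate via the functional equation below and monotonicity to $\lim_{t\downarrow 0} g(t) = 0 \neq 1$).

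The core step is the functional equation. Fix $s,t \geq 0$. Because $\{\tabs > s+t\} \subseteq \{\tabs > s\}$,
\[
g(s+t) = P_\nu(\tabs > s+t) = P_\nu(\tabs > s)\, P_\nu(\tabs > s+t \mid \tabs > s).
\]
On $\{\tabs > s\}$ we have $\mcX_s \in S$, so conditioning on the state at time $s$ and applying the Markov property gives
\[
P_\nu(\tabs > s+t \mid \tabs > s) = \sum_{x \in S} P_\nu(\mcX_s = x \mid \tabs > s)\, P_x(\tabs > t).
\]
By Definition~\ref{D: QSD}, the conditional law of $\mcX_s$ given $\{\tabs > s\}$ is exactly $\nu$, so the right-hand side equals $P_\nu(\tabs > t) = g(t)$. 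Hence $g(s+t) = g(s)\,g(t)$.

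It remains to solve the functional equation: a non-increasing function $g \colon [0,\infty) \to (0,1]$ with $g(0) = 1$ and $g(s+t) = g(s)\,g(t)$ must satisfy $g(t) = e^{-\lambda(\nu)t}$ with $\lambda(\nu) := -\log g(1) \in [0,\infty)$ (multiplicativity yields $g(q) = g(1)^q$ for rational $q \ge 0$, and monotonicity extends this to all real $t \ge 0$; $\lambda(\nu) \geq 0$ since $g(1) \le 1$). This establishes that $\tabs$ is exponentially distributed with parameter $\lambda(\nu)$ under $P_\nu$. I expect the one point that deserves care is the Markov-property step — justifying that one may restrict to the event $\{\tabs > s\}$, on which $\mcX_s \in S$, and apply the Markov property at the deterministic time $s$ before taking the conditional expectation; the positivity and monotonicity arguments and the resolution of the functional equation are routine.
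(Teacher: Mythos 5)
Your proof is correct, and every step that needs care is handled: the factorization
\begin{equation*}
P_\nu(\tabs>s+t)=P_\nu(\tabs>s)\sum_{x\in S}P_\nu(\mcX_s=x\dado \tabs>s)\,P_x(\tabs>t),
\end{equation*}
valid because $\{\tabs>s\}$ is measurable at time $s$ and on this event $\tabs$ equals $s$ plus the absorption time of the shifted chain, the use of Definition~\ref{D: QSD} to collapse the sum to $P_\nu(\tabs>t)$, the positivity of the survival function (needed for the conditioning to make sense, and recoverable from $g(t)=g(t/2)^2$ together with $g(0{+})=1$), and the resolution of the multiplicative Cauchy equation under monotonicity are exactly the classical argument. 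Note, however, that the paper itself does not prove this theorem: it states it as a known fact, pointing to \textcite{ferrari92}, and the surrounding computation only treats the finite irreducible case, where Perron--Frobenius supplies a left eigenvector with $\nu\PP(t)=e^{-\lambda t}\nu$, so that summing over the states gives $P_\nu(\tabs>t)=e^{-\lambda t}$ for that particular eigen-QSD. Your route is more general and self-contained: it applies to an arbitrary QSD on a countable transient set, uses only the Markov property and the defining invariance of $\nu$ under conditioning, and needs no finiteness, irreducibility, or spectral theory. What it does not give, and what the spectral argument does, is the identification of the decay parameter as the Perron--Frobenius eigenvalue of the restricted generator $\QQ$; in your proof $\lambda(\nu)=-\log P_\nu(\tabs>1)$ is only defined implicitly and may genuinely depend on $\nu$, which is consistent with the paper's remark that the parameter depends on the QSD.
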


In fact, a necessary condition for the existence of a QSD is that the process is exponentially killed, which was first proved by \textcite{ferrari92} for birth-and-death chains. The parameter depends on $\nu$.
Now, since $P_{\nu}(\mcX_t=i)=e^{-\lambda t}\nu_i$ for any $i \in S$ and $t\geq 0$, we have that
$$\nu\PP(t) = \nu P_{\nu}(\tabs>t), \, t\geq0,$$
whence
$$P_{\nu}(\mcX_t=i \dado \tabs>t)=\nu_i, i \in S, t\geq 0,$$
which implies that $\nu$ is a QSD.

\subsection{Reducible case}\label{SS: reducible}

For reducible Markov processes, we have no guarantee that there is a unique QSD or that this probability measure exists. Most results for this scenario are presented by \textcite{van08,van09}. 
In preparation for our study of the MT model, we review some of the results of \cite{van08}.

We suppose that the set of transient states $S$ is finite, consisting of $L$ communicating classes $S_1, S_2, \dots, S_L$.
Each class $S_k$ is assigned a submatrix $\QQ_k$ of rates. First, the classes will be defined following a partial order.

\begin{defn}
    We say the class $S_i$ is \textbf{accessible} from $S_j$ if there exists a sequence of states $k_0, k_1, \dots, k_{\ell}$, such that $k_0 \in S_j$, $k_{\ell} \in S_i$, and $q(k_m, k_{m+1}) > 0$ for every $m < \ell$. 
    In this case, we write $S_i \prec S_j$.
\end{defn}

As the first hypothesis of \textcite{van08}, it is assumed that $S_i \prec S_j$ implies that $i \leq j$. 
Thus, the set of eigenvalues of the generator matrix $\QQ$ is precisely the union of the sets of eigenvalues of the individual $\QQ_k$'s. Let $-\lambda_k$ denote the unique eigenvalue of $\QQ_k$ with maximal real part. 
As a result, if we set $\lambda:= \min_{1 \leq k \leq L} \lambda_k$, then $-\lambda$ is the eigenvalue of $\QQ$ with maximal real part. Notice that we $-\lambda$ may have algebraic multiplicity greater than one, that is, it may be equal to more than one of the $\lambda_k$'s.

To guarantee the uniqueness of the QSD, we must assume that $-\lambda$ has geometric multiplicity one.
This ensures that the corresponding eigenspace has dimension one. 
Consequently, a unique QSD will be associated with the maximal eigenvalue. Now, defining $I(\lambda) := \{k:\lambda_k=\lambda\}$, we have that $\card(I(\lambda))$ is exactly the algebraic multiplicity of $-\lambda$.
Of course, if $\card(I(\lambda))=1$, then the geometric multiplicity of $-\lambda$ is also equal to one. 
For the case when $\card(I(\lambda)) > 1$, \textcite[Theorem~6]{van08} prove that a necessary and sufficient condition for $-\lambda$ to have geometric multiplicity one is the set $\{S_k, k \in I(\lambda)\}$ to be linearly ordered.
That is assumed to be true. We call $\{S_k, k \in I(\lambda)\}$ the \textbf{set of maximal classes}.

Also, defining $a(\lambda):=\min I(\lambda)$, as a result of the partial ordering imposed, the class $S_{a(\lambda)}$ is associated with the maximal eigenvalue and is the class that is accessible from all the possible other classes that also have the maximal value, that is, $S_{a(\lambda)}$ is the \textbf{last maximal class the process can reach}.

The authors show that under the hypothesis that $-\lambda$ has geometric multiplicity one, uniqueness of the QSD is guaranteed if $S_{a(\lambda)}$ is accessible from the initial distribution, that is, for any initial configuration from which it is possible to reach $S_{a(\lambda)}$ there will exist a unique QSD. In this case, $\nu$ is unique and $\nu_i>0$ if and only if the state $i$ is accessible from $S_{a(\lambda)}$.

\begin{thm}[\cite{van08}, Theorem~3]\label{thm_reduc_1}
    If $-\lambda$, the eigenvalue of $\QQ$ with maximal real part, has geometric multiplicity one, then $\mcX$ has a unique quasi-stationary distribution $\nu$ from which $S_{a(\lambda)}$ is accessible. The vector $\nu$ is the (unique, nonnegative) solution of the system $\nu \QQ = -\lambda \nu$, with $\nu$ normalized.
\end{thm}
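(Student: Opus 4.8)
The plan is to reduce the statement to a question about the nonnegative left eigenvectors of $\QQ$ and then analyse these using the block lower-triangular form of $\QQ$ together with Perron--Frobenius theory applied to the irreducible diagonal blocks $\QQ_1,\dots,\QQ_L$. First, by the exponential-killing theorem a QSD $\nu$ satisfies $\nu\PP(t)=e^{-\lambda(\nu)t}\nu$; since $\PP(t)=e^{\QQ t}$, differentiating at $t=0$ gives $\nu\QQ=-\lambda(\nu)\nu$, and conversely, as already observed, every nonnegative normalized solution of $\nu\QQ=-\theta\nu$ is a QSD. Because $S$ is finite and transient, $\PP(t)\to0$, so every eigenvalue of $\QQ$ has negative real part, $-\theta$ is a genuine eigenvalue of $\QQ$, and $\theta\geq\lambda$. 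So it suffices to classify the nonnegative normalized left eigenvectors of $\QQ$ and to single out the one whose support reaches $S_{a(\lambda)}$.

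With the classes ordered as assumed, $\QQ_{ji}=0$ whenever $i>j$, so if $\nu=(\nu^{(1)},\dots,\nu^{(L)})\geq0$ solves $\nu\QQ=-\theta\nu$ and $m$ is the largest index with $\nu^{(m)}\neq0$, the $m$-th block reads $\nu^{(m)}\QQ_m=-\theta\nu^{(m)}$. Perron--Frobenius for the irreducible matrix $\QQ_m$ then forces $\theta=\lambda_m$ and $\nu^{(m)}$ to be a positive multiple of the strictly positive Perron left-eigenvector of $\QQ_m$. The lower blocks obey, for $i<m$, the recursion
\[ \nu^{(i)}(\QQ_i+\lambda_m I)=-\sum_{j>i}\nu^{(j)}\QQ_{ji}, \]
whose right-hand side is entrywise $\leq 0$. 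When $\lambda_i>\lambda_m$ the matrix $\QQ_i+\lambda_m I$ is invertible, $-(\QQ_i+\lambda_m I)^{-1}=\int_0^\infty e^{(\QQ_i+\lambda_m I)t}\,dt\gg0$ by irreducibility, so $\nu^{(i)}$ is determined, with $\nu^{(i)}\gg0$ exactly when $S_i$ is accessible from $S_m$ and $\nu^{(i)}=0$ otherwise. When $\lambda_i=\lambda_m$ the matrix $\QQ_i+\lambda_m I$ is singular, and the recursion is solvable only if its right-hand side is orthogonal to the strictly positive Perron right-eigenvector of $\QQ_i$ --- a Fredholm condition that fails as soon as that right-hand side is nonzero.

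For existence, the eigenvalue $-\lambda$ of maximal real part admits a nonnegative left eigenvector (Perron--Frobenius applied to $\QQ+cI$), and by the geometric-multiplicity-one hypothesis this eigenvector spans the eigenspace; normalizing gives a QSD $\nu$ with $\nu\QQ=-\lambda\nu$. To locate its support, run the recursion with $\theta=\lambda$: the top class $S_m$ of the support has $\lambda_m=\lambda$, so $m\in I(\lambda)$; if $m\neq a(\lambda)$ then, since the maximal classes are linearly ordered (by \textcite[Theorem~6]{van08}) and $a(\lambda)=\min I(\lambda)$, a chain of direct transitions runs from $S_m$ down to $S_{a(\lambda)}$, and the first maximal class met strictly below $S_m$ along it receives a nonzero nonpositive inhomogeneous term --- strict positivity of $\nu^{(m)}$ having propagated down through the intermediate, strictly faster classes --- violating the Fredholm condition; hence $m=a(\lambda)$. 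Since every class strictly below $S_{a(\lambda)}$ then has $\lambda_i>\lambda$, the resolvent-positivity step gives $\nu_i>0$ iff $i$ is accessible from $S_{a(\lambda)}$, so $S_{a(\lambda)}$ is accessible from $\nu$. For uniqueness, let $\nu'$ be any QSD from which $S_{a(\lambda)}$ is accessible, with decay $\theta'=\lambda_{m'}$, $S_{m'}$ its top support class; accessibility forces $S_{a(\lambda)}\preceq S_{m'}$. If $\theta'>\lambda$, follow a chain from $S_{m'}$ toward $S_{a(\lambda)}$ and let $S_{i^\circ}$ be the first class strictly below $S_{m'}$ on it with $\lambda_{i^\circ}\leq\theta'$ (it exists since $\lambda_{a(\lambda)}=\lambda<\theta'$); propagating strict positivity through the earlier, faster classes makes the block-$i^\circ$ equation $(\nu')^{(i^\circ)}(\QQ_{i^\circ}+\theta' I)=R$ have $R\leq0$, $R\neq0$, and pairing on the right with the Perron right-eigenvector $u\gg0$ of $\QQ_{i^\circ}$ gives $(\theta'-\lambda_{i^\circ})\,(\nu')^{(i^\circ)}u=R\cdot u<0$ when $\lambda_{i^\circ}<\theta'$, or an unsolvable equation when $\lambda_{i^\circ}=\theta'$; either way $\nu'\geq0$ is contradicted. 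Hence $\theta'=\lambda$, so $\nu'$ lies in the one-dimensional $(-\lambda)$-eigenspace and $\nu'=\nu$.

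The main obstacle is the bookkeeping around the maximal classes: one must check that strict positivity within classes genuinely propagates along connecting chains, so that the inhomogeneous term reaching the offending class is provably nonzero, and it is precisely here --- together with the one-dimensionality of the $(-\lambda)$-eigenspace --- that the geometric-multiplicity-one hypothesis, through the linear ordering of the maximal classes, is indispensable. The remaining ingredients, namely Perron--Frobenius for irreducible sub-generators and positivity of resolvents, are routine.
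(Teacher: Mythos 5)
The paper does not prove this statement at all: it is quoted, with attribution, from \textcite[Theorem~3]{van08}, so there is no internal argument to compare yours against. Judged on its own, your reconstruction is essentially sound and follows the same route as the cited source: put $\QQ$ in its block lower-triangular (Frobenius normal) form, apply Perron--Frobenius to the irreducible diagonal blocks, and classify nonnegative solutions of $\nu\QQ=-\theta\nu$ through the block recursion, using strict positivity of $-(\QQ_i+\theta I)^{-1}=\int_0^\infty e^{(\QQ_i+\theta I)t}\,dt$ when $\lambda_i>\theta$, and the solvability obstruction obtained by pairing with the positive Perron right eigenvector when $\lambda_i\leq\theta$. The linear ordering of the maximal classes (equivalent to geometric multiplicity one by \textcite[Theorem~6]{van08}) is exactly what pins the top class of the support of the $(-\lambda)$-eigenvector to $S_{a(\lambda)}$, and one-dimensionality of that eigenspace yields uniqueness once the decay parameter is forced to equal $\lambda$. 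The equivalence between QSDs and normalized nonnegative left eigenvectors, via exponential killing and $\PP(t)=e^{\QQ t}$, is unproblematic on a finite state space.

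One step is asserted too quickly. In the uniqueness argument you claim that accessibility of $S_{a(\lambda)}$ from $\nu'$ forces $S_{a(\lambda)}\preceq S_{m'}$ for the top support class $S_{m'}$; the definition only provides \emph{some} support class $S_k$ with $S_{a(\lambda)}\preceq S_k$, and nothing rules out a priori that the overall top class of the support does not access $S_{a(\lambda)}$. The repair stays inside your own machinery: choose $S_k$ of maximal index among the support classes from which $S_{a(\lambda)}$ is accessible; then no higher support class can feed into $S_k$ (any such class would also access $S_{a(\lambda)}$, contradicting maximality), so the block-$k$ equation closes and gives $\theta'=\lambda_k$ with $(\nu')^{(k)}\gg 0$. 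Note also that your resolvent/Perron dichotomy shows every nonzero block of a nonnegative eigenvector is automatically strictly positive, so the chain argument can be launched from $S_k$, and for $\theta'>\lambda$ the contradiction appears at the latest at $S_{a(\lambda)}$ itself. With that adjustment the proof is complete.
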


It is crucial to limit the discussion to quasi-stationary distributions from which $S_{a(\lambda)}$ is accessible. 
Otherwise, multiple QSDs may exist, as shown in the example presented in Section~3 of \textcite{van08}.

Hence, we have to consider an initial distribution from which $S_{a(\lambda)}$ is accessible; see Figure~\ref{Fig: QSD-support} for an illustration.
Notice that, under the conditions of Theorem~\ref{thm_reduc_1}, if $S_{a(\lambda)} = S_1$ for instance, then there is a unique QSD, regardless of the initial distribution. 

The following result complements Theorem~\ref{thm_reduc_1}.

\begin{thm}[\cite{van08}, Theorem~5]\label{thm_reduc_2}
    Suppose that $-\lambda$, the eigenvalue of $\QQ$ with maximal real part, has geometric multiplicity one, and suppose that the initial distribution $\mu$ is such that $S_{a(\lambda)}$ is accessible.
    Then the following limits exist:
    \begin{align*}
    &\lim_{t \to \infty} P_{\mu}(\tabs > t + s \dado \tabs > t) = e^{-\lambda s}, \, s \geq 0,\\[0.1cm]%
    &\lim_{t \to \infty} P_{\mu}(\mcX_t = j \dado \tabs > t) = \nu_j, \, j \in S,
    \end{align*}
    where $\nu$ is the unique QSD from which $S_{a(\lambda)}$ is accessible.
\end{thm}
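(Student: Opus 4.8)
The plan is to derive both statements from the leading-order asymptotics, as $t\to\infty$, of the sub-transition semigroup $\PP(t)=e^{\QQ t}$ restricted to the transient set $S$. For any initial distribution $\mu$ on $S$ and any $j\in S$,
\[
P_{\mu}(\mcX_t=j,\tabs>t)=(\mu\PP(t))_j,\qquad P_{\mu}(\tabs>t)=\sum_{j\in S}(\mu\PP(t))_j ,
\]
so once $\mu\PP(t)$ is known up to leading order, both claimed limits follow by dividing.

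First I would record the spectral picture of $\QQ$. Under the ordering hypothesis $S_i\prec S_j\Rightarrow i\le j$, the matrix $\QQ$ is block triangular with diagonal blocks $\QQ_1,\dots,\QQ_L$, each a sub-generator of a finite irreducible class; applying Perron--Frobenius to $\QQ_k+cI$ (nonnegative, irreducible and primitive for $c$ large), the eigenvalue $-\lambda_k$ of $\QQ_k$ of maximal real part is real and is the \emph{unique} eigenvalue of $\QQ_k$ of maximal real part. Hence $-\lambda=-\min_k\lambda_k$ is real and is the unique eigenvalue of $\QQ$ of maximal real part, of algebraic multiplicity $m:=\card(I(\lambda))$; by hypothesis its geometric multiplicity is one, so in the Jordan form of $\QQ$ the eigenvalue $-\lambda$ contributes a single Jordan block, of size $m$, while every other Jordan block of $\QQ$ has eigenvalue of real part strictly below $-\lambda$. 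From the Jordan decomposition --- equivalently, by inverting $(zI-\QQ)^{-1}$ via the residue at its rightmost pole $z=-\lambda$, a pole of order $m$ --- one obtains
\[
e^{\QQ t}=e^{-\lambda t}\,\frac{t^{m-1}}{(m-1)!}\,R+o\!\bigl(e^{-\lambda t}t^{m-1}\bigr),\qquad R:=\lim_{z\to-\lambda}(z+\lambda)^{m}(zI-\QQ)^{-1},
\]
where $R$ is a nonzero rank-one matrix; moreover $R\ge 0$ entrywise, since $(zI-\QQ)^{-1}=\int_0^\infty e^{-zt}e^{\QQ t}\,dt\ge 0$ for real $z>-\lambda$. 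Because the right and left eigenspaces of $\QQ$ for $-\lambda$ are both one-dimensional, $R=\psi\,\nu$ as an outer product of a nonnegative right eigenvector $\psi$ ($\QQ\psi=-\lambda\psi$) and a nonnegative left eigenvector $\nu$ ($\nu\QQ=-\lambda\nu$); after normalising $\nu$ to a probability vector, $\nu$ is exactly the QSD furnished by Theorem~\ref{thm_reduc_1}, and then $\psi$ is determined. Consequently
\[
\mu\PP(t)=e^{-\lambda t}\,\frac{t^{m-1}}{(m-1)!}\,(\mu\psi)\,\nu+o\!\bigl(e^{-\lambda t}t^{m-1}\bigr),\qquad P_{\mu}(\tabs>t)=e^{-\lambda t}\,\frac{t^{m-1}}{(m-1)!}\,(\mu\psi)+o\!\bigl(e^{-\lambda t}t^{m-1}\bigr).
\]

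The crucial point --- and the step I expect to be the main obstacle --- is to prove that the scalar $\mu\psi$ is strictly positive precisely when $S_{a(\lambda)}$ is accessible from $\mu$, so that the displayed expressions are genuine leading terms. For this one uses the structure of $\psi$ underlying Theorem~\ref{thm_reduc_1}: $\psi_i>0$ if and only if $S_{a(\lambda)}$ is accessible from the state $i$. Indeed, restricting $\QQ\psi=-\lambda\psi$ to the rows of the irreducible block $\QQ_{a(\lambda)}$ (which has parameter $\lambda_{a(\lambda)}=\lambda$) forces $\psi$ to be a Perron eigenvector there, hence strictly positive on $S_{a(\lambda)}$; solving the remaining equations up the triangular structure propagates positivity to every state having access to $S_{a(\lambda)}$, whereas states with no such access sit in the strictly faster part of the spectrum and receive weight $0$. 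Granting this, $\mu\psi=\sum_i\mu_i\psi_i>0$ exactly when $\mu$ charges some state from which $S_{a(\lambda)}$ is reachable, i.e.\ when $S_{a(\lambda)}$ is accessible from $\mu$.

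Assuming $\mu\psi>0$, the conclusions are immediate. Dividing the two asymptotic expansions (legitimate since $\mu\psi\neq 0$),
\[
P_{\mu}(\mcX_t=j\dado\tabs>t)=\frac{(\mu\PP(t))_j}{P_{\mu}(\tabs>t)}\;\xrightarrow[t\to\infty]{}\;\frac{(\mu\psi)\,\nu_j}{\mu\psi}=\nu_j ,
\]
and, for fixed $s\ge0$,
\[
P_{\mu}(\tabs>t+s\dado\tabs>t)=\frac{P_{\mu}(\tabs>t+s)}{P_{\mu}(\tabs>t)}\;\sim\;e^{-\lambda s}\Bigl(\frac{t+s}{t}\Bigr)^{m-1}\;\xrightarrow[t\to\infty]{}\;e^{-\lambda s}.
\]
Uniqueness of $\nu$ among QSDs from which $S_{a(\lambda)}$ is accessible, together with the characterisation $\nu\QQ=-\lambda\nu$, is precisely Theorem~\ref{thm_reduc_1}; alternatively, the first display exhibits a $\mu$-independent quasi-limiting distribution, which by \textcite[Proposition~1]{meleard12} is a QSD and therefore coincides with the one of Theorem~\ref{thm_reduc_1}.
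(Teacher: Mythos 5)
This statement is quoted in the paper from \cite{van08} without proof, so your argument has to stand on its own, and it has a genuine gap at exactly the step you yourself flag as the main obstacle: the claim that $\psi_i>0$ if and only if $S_{a(\lambda)}$ is accessible from $i$, hence that $\mu\psi>0$ under the theorem's hypothesis. This is false as soon as $m=\card(I(\lambda))\ge 2$. Take $S=\{1,2\}$, each state a singleton class, with transitions $2\to1$ at rate $1$, $2\to\Delta$ at rate $1$, $1\to\Delta$ at rate $2$, so that
\[
\QQ=\begin{pmatrix}-2&0\\[2pt] 1&-2\end{pmatrix},\qquad \lambda_1=\lambda_2=\lambda=2,\qquad a(\lambda)=1,
\]
and $-\lambda$ has algebraic multiplicity $2$ and geometric multiplicity $1$. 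Here the right eigenvector is $\psi=(0,1)^{T}$ and the left eigenvector is $\nu=(1,0)$: $\psi$ vanishes on $S_{a(\lambda)}$ even though state $1$ trivially has access to $S_{a(\lambda)}$. In general (with the maximal classes linearly ordered) $\nu$ is supported on the states accessible \emph{from} $S_{a(\lambda)}$, but $\psi$ is supported on the states having access to the maximal class of \emph{largest} index; your sketch that restricting $\QQ\psi=-\lambda\psi$ to the block $\QQ_{a(\lambda)}$ ``forces'' a Perron eigenvector only shows that $\psi$ restricted there is zero or Perron, and the Jordan-chain structure forces it to be zero whenever $m\ge2$.

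Consequently, under the actual hypothesis of the theorem the coefficient $\mu\psi$ can vanish (in the example, $\mu=\delta_1$ satisfies the hypothesis, the conclusions hold trivially, yet $\mu\psi=0$), so your displayed expansions are no longer leading-order: $\mu\PP(t)=o\bigl(e^{-\lambda t}t^{m-1}\bigr)$ and both numerator and denominator are swallowed by the error terms, so the division yields nothing. As written, your argument proves the two limits only under the strictly stronger assumption that the top maximal class is accessible from $\mu$. A correct proof must track, for the given $\mu$, how many of the maximal classes the chain can actually pass through: $P_\mu(\tabs>t)$ then decays like $e^{-\lambda t}t^{r-1}$ with a $\mu$-dependent exponent $r\le m$, and one has to show that the corresponding leading coefficient vector is still proportional to $\nu$; this finer analysis is what \cite{van08} carry out. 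The remaining ingredients of your argument — the block-triangular spectral picture, the single Jordan block of size $m$, the nonnegative rank-one residue $R=\psi\nu$, and the reduction of both limits to the asymptotics of $\mu\PP(t)$ — are sound.
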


\begin{figure}[!htb]
    \centering
    \resizebox{0.8\textwidth}{!}{\tikzset{every picture/.style={line width=0.75pt}} 

\begin{tikzpicture}[x=0.75pt,y=0.75pt,yscale=-1,xscale=1]

\draw   (420,50.13) .. controls (420,45.46) and (418,43.16) .. (374.72,43.21) -- (213.92,44.98) .. controls (207.25,45.05) and (203.9,42.76) .. (203.85,38.09) .. controls (203.9,42.76) and (200.59,45.13) .. (193.92,45.2)(196.92,45.17) -- (44.72,46.84) .. controls (40.05,46.89) and (37.75,49.25) .. (37.8,53.92) ;
\draw   (334.8,93.13) .. controls (334.8,97.8) and (337.13,100.13) .. (341.8,100.13) -- (475.3,100.13) .. controls (481.97,100.13) and (485.3,102.46) .. (485.3,107.13) .. controls (485.3,102.46) and (488.63,100.13) .. (495.3,100.13)(492.3,100.13) -- (628.8,100.13) .. controls (665.47,100.13) and (667.8,97.8) .. (667.8,93.13) ;

\draw (31,55.4) node [anchor=north west][inner sep=0.75pt]  [font=\LARGE]  {$S_{L} \succ S_{L-1} \succ S_{L-2} \succ \cdots \succ \textcolor[rgb]{1,0,0}{S}\textcolor[rgb]{1,0,0}{_{a( \lambda )}} \succ \cdots \succ S_{3} \succ S_{2} \succ S_{1}$};
\draw (130,8.23) node [anchor=north west][inner sep=0.75pt]  [font=\large] [align=left] {Initial distribution};
\draw (436,117.23) node [anchor=north west][inner sep=0.75pt]  [font=\large] [align=left] {QSD support};

\end{tikzpicture}}
        \caption{Example of an initial distribution and the support of the QSD according to the partial order.}
    \label{Fig: QSD-support}
\end{figure}
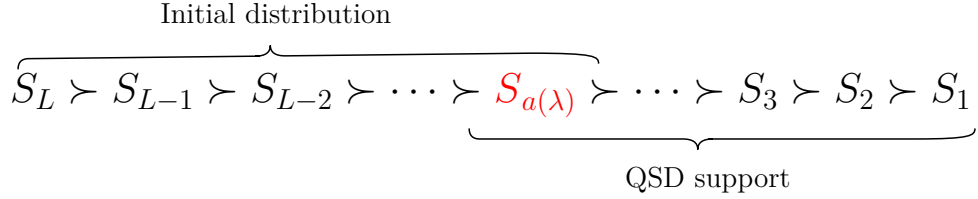

\subsection{Ratio of expectations approach}

The ratio of expectations (RE) distribution is another quantity that has been considered to describe a quasi-stationary behavior.
This was introduced by \textcite{ewens63} for the famous Moran genetic model, and by \textcite{darroch65,darroch67}, who deal with absorbing finite Markov chains (in discrete-time \cite{darroch65} and in continuous-time \cite{darroch67}). 
While a QSD aims to analyze the long-term behavior of an absorbing process by looking at the trajectories that survive the longest, the RE-distribution describes the dynamics before absorption in terms of the mean time spent in each state.

It may be worth reminding the reader that here $\{\mcX_t\}_{t \geq 0}$ is a continuous-time Markov chain with a countable state space $S^{\prime}=S\cup \Delta$, where $S$ is a transient class and $\Delta$ is an absorption set.

\begin{defn}
\label{D: RE}
Given $i, j \in S$, let
\begin{align*}
&T_{i}(j) = \text{time spent in $j$ for $\{\mcX_t\}_{t \geq 0}$ starting from $i$},\\[0.1cm]%
&T_i = \sum_{j \in S} T_i(j) = \text{total time for absorption starting from $i$}.
\end{align*}
For a fixed initial distribution $\mu$ on $S$, we define the \textbf{ratio of expectations distribution} by
\begin{equation}
\label{F: RE-AIC}
    r_j(\mu) = \frac{\sum_{i \in S} \mu_i E[T_{i}(j)]}{\sum_{i \in S} \mu_i E[T_{i}]}, \, j \in S.
\end{equation}
For a fixed state $i \in S$, we take $\mu = \delta_i$, so that the \textbf{ratio of expectations distribution} \eqref{F: RE-AIC} reduces to
\begin{equation}
\label{F: RE-FIC}
    r_j = \frac{E[T_{i}(j)]}{E[T_{i}]}, \, j \in S.
\end{equation}
\end{defn}

\textcite{darroch67} present the RE-distribution as a possible measure to describe the behavior of an absorbing process before absorption.
However, no further investigation is made, since it depends on the initial distribution. Nevertheless, the RE-distribution may provide a nice alternative for the QSD, especially when the process is absorbed too fast.

As noted in \textcite{artalejo10}, for a fixed state $i \in S$, the RE-distribution \eqref{F: RE-FIC} exists provided that $0 < E[T_i] < \infty$, whether or not $S$ is finite.
The RE-distribution assigns a positive probability to every state $j$ accessible from the initial state $i$.
This characteristic substantially distinguishes the RE-distribution from the QSD.
Furthermore, \textcite{artalejo10} consider the SIR epidemic model and compare these two approaches for understanding the behavior before absorption. 
From the results we present in Section~\ref{SS: reducible}, it turns out that all the QSD probability mass for the Markovian SIR epidemic model is concentrated at a single state; more details are given in Section \ref{S: SIR}.
On the other hand, the RE-distribution has positive values for all the states.

\textcite[Section 3.1]{meleard12} consider a continuous-time Markov chain $\{\mcX_t\}_{t \geq 0}$ with state space $\{0, 1, \dots, N\}$, $N \geq 1$, having $0$ as its unique absorbing state.
Under the hypothesis that $\mcX$ is irreducible and aperiodic before extinction, the authors argue (see Remark~3) that if the spectral gap of the generator matrix is much bigger than the decay parameter of the QSD, then the convergence to the QSD will occur in a relatively short time and, after a much longer period, the process will be absorbed.
According to \textcite{artalejo10}, a comparison between QSD and RE-distribution is justified only when the convergence to the QSD is relatively fast.
The RE-distribution method remains applicable regardless of the duration of the absorption time.
Thus, it offers a natural means to analyze the behavior of the process before absorption. 

\section{Quasi-stationary behavior of the MT model}
\label{S: QS Rumor}

In this section, we present our main results on the quasi-stationarity of the MT model.
In Section~\ref{SS: QSDs Rumor}, we apply the machinery explained in Section~\ref{SS: reducible} to prove that there is a unique QSD, which is the point mass at the state $(0, 1)$.
This probability measure is also a QLD for any initial distribution defined on the set of transient states. 
Then, striving to obtain a non-trivial QSD measure, we suitably modify the absorption set of the process. 
Thus, for the adjusted model, we prove the existence and uniqueness of a QSD that is not concentrated at a single state.
Under the adjustment, this QSD is also a QLD for the initial condition $(X_0, Y_0) = (N, 1)$.
In Section~\ref{SS: QSD Formula}, we get an explicit formula for this non-trivial QSD. 
Finally, in Section \ref{SS: RE Rumor}, we discuss the RE-distribution for the MT model.

\subsection{QSDs for the MT model}
\label{SS: QSDs Rumor}

Recall that $X_t$, $Y_t$, and $Z_t$ represent the number of people in the class of ignorant, spreader, and stifler individuals at time~$t$, respectively.
The population size is $N+1$, so that its evolution is usually described by the continuous-time Markov chain $\{(X_t, Y_t)\}_{t \geq 0}$ with transition scheme~\eqref{F: MT Rates} and initial state $(N, 1)$.
Notice that the total rate at state $(X, Y)$ equals $N Y$.
In addition, the state space, the absorbing set, and the transient set are given respectively by
\begin{align*}
S^{\prime} &= \{ (x, y) \in \bbZ^2: 0 \leq x \leq N, 0 \leq y \leq N + 1 - x \},\\[0.1cm]%
\Delta &= \{(x, 0): x = 0, \dots, N - 1\},\\[0.1cm]%
S &= S^{\prime} \setminus \Delta.
\end{align*}
Also let
\[ \tau_\Delta = \inf\{t > 0: Y_t = 0\} \]
be the absorption time for the process.
Observe that the state space is finite, and the number of transitions until absorption is bounded above by $2N+1$, so absorption occurs almost surely.
Therefore, it makes sense to look for a quasi-stationary behavior. 
Figure \ref{Fig: MT-graph} shows how the process evolves, where the black points represent transient states, and the red points correspond to absorbing states. 
We note that the MT model is a \emph{strictly evolutionary process}: once it visits a transient state $(x, y) \in S$, it never comes back. Thus, using the findings from \textcite{van08}, which are presented in Section~\ref{SS: reducible}, we get our first result.

\begin{figure}[!htb]
\centering
\includegraphics[scale=1]{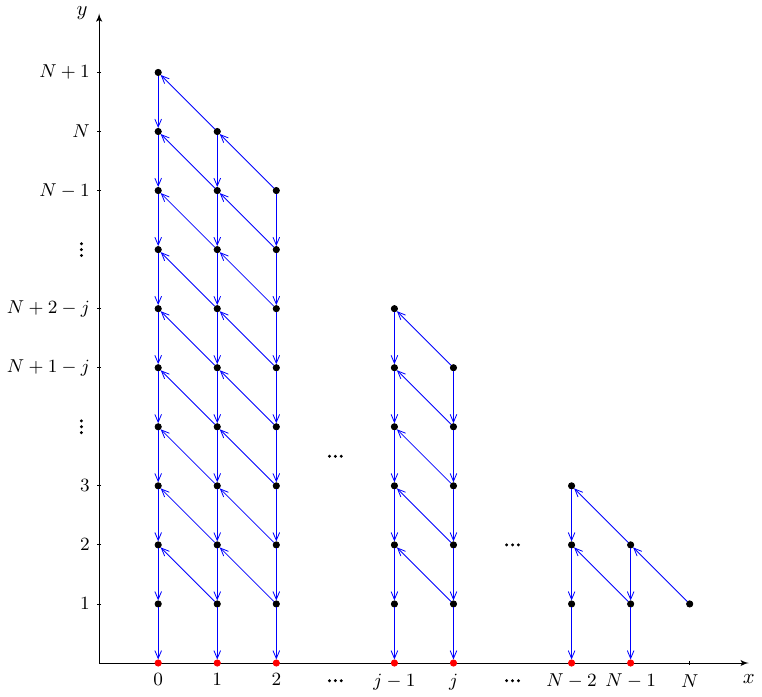}
\caption{Possible transitions for the MT model.}
\label{Fig: MT-graph}
\end{figure}

\begin{thm}
\label{T: QSD-MT}
The MT model has a unique QSD given by $\nu = \delta_{(0, 1)}$, the point mass at the state $(0, 1)$.
Moreover, for any initial distribution $\mu$ on $S$, we have that
\begin{equation}
\label{F: QLD-MT}
\lim_{t \to \infty} P_{\mu}(X_t = x, Y_t = y \dado \tau_\Delta > t) = \nu(x, y), \, (x, y) \in S.
\end{equation}
That is, $\nu$ is the QLD (Yaglom limit) for any initial distribution $\mu$ on $S$.
\end{thm}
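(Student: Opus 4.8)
The plan is to realize the MT model as an instance of the reducible framework of \textcite{van08} recalled in Section~\ref{SS: reducible}, and to compute explicitly the three quantities that control the conclusion there: the decay parameter $\lambda$, the index set $I(\lambda)$, and the last maximal class $S_{a(\lambda)}$. First I would record the elementary structure of the chain. From the rates in \eqref{F: MT Rates}, the total exit rate at a state $(x,y)$ equals $XY+Y(N-X)=Ny$, which is strictly positive on every transient state (all of which have $y\ge1$; the point $(N,0)$ formally belongs to $S'\setminus\Delta$ by the stated definitions, but it is unreachable from $(N,1)$ -- the only transition out of $(N,1)$ has rate $XY=N$ -- so it plays no role and may be discarded). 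Since the MT model is strictly evolutionary, the accessibility relation restricted to $S$ has no cycles; hence every communicating class contained in $S$ is a singleton $\{(x,y)\}$, with associated $1\times1$ block $\QQ_{(x,y)}=(-Ny)$, and after relabelling the singletons by a reverse topological order the hypothesis $S_i\prec S_j\Rightarrow i\le j$ holds. It follows that $\lambda_{(x,y)}=Ny$, so $\lambda:=\min_{(x,y)\in S}Ny=N$, attained precisely at the states with $y=1$; thus $I(\lambda)$ consists of the classes $\{(x,1):0\le x\le N\}$.

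Next I would check that $-\lambda$ has geometric multiplicity one and pin down $S_{a(\lambda)}$. The maximal classes $\{(x,1)\}$ are \emph{linearly} ordered by $\prec$: for $x\le x'$ the path $(x',1)\to(x'-1,2)\to\cdots\to(x,x'-x+1)\to(x,x'-x)\to\cdots\to(x,1)$ uses only transitions of positive rate, so $(x,1)\prec(x',1)$, while the reverse relation is impossible because $X_t$ is non-increasing; hence $(0,1)\prec(1,1)\prec\cdots\prec(N,1)$. By \textcite[Theorem~6]{van08} this linear order forces the geometric multiplicity of $-\lambda$ to be one, and $a(\lambda)=\min I(\lambda)$ selects the $\prec$-minimal maximal class, so $S_{a(\lambda)}=\{(0,1)\}$. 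Moreover, from any transient state $(x,y)$ one reaches $(0,1)$ via $(x,y)\to(x-1,y+1)\to\cdots\to(0,x+y)\to(0,x+y-1)\to\cdots\to(0,1)$, so $S_{a(\lambda)}$ is accessible from every initial distribution $\mu$ on $S$.

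Finally I would invoke the two results of \textcite{van08}. Theorem~\ref{thm_reduc_1} produces a unique QSD $\nu$ from which $S_{a(\lambda)}$ is accessible, with $\nu_i>0$ if and only if $i$ is accessible from $S_{a(\lambda)}=\{(0,1)\}$; but the only transition out of $(0,1)$ leads to $(0,0)\in\Delta$, so no state of $S$ other than $(0,1)$ is accessible from it, forcing $\nu=\delta_{(0,1)}$ (consistently, $\delta_{(0,1)}\QQ=-N\,\delta_{(0,1)}$, matching the eigenvector characterization). To upgrade this to uniqueness among \emph{all} QSDs, I would note that any further QSD would have to be supported on states from which $(0,1)$ is inaccessible, and we have just seen there are none in $S$. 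Since $S_{a(\lambda)}$ is accessible from any $\mu$ on $S$, Theorem~\ref{thm_reduc_2} then gives $\lim_{t\to\infty}P_\mu(X_t=x,Y_t=y\dado\tabs>t)=\nu(x,y)$ for all $(x,y)\in S$, which is \eqref{F: QLD-MT}. I expect the only non-mechanical points to be the verification that the maximal classes are linearly ordered -- precisely what secures geometric multiplicity one, hence uniqueness -- and the structural observation that $S_{a(\lambda)}$ is the single state $(0,1)$ whose sole exit leads into $\Delta$, which is what collapses the QSD to a point mass.
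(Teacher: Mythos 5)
Your proposal is correct and follows essentially the same route as the paper: identify each transient state as a singleton class in the van Doorn--Pollett framework, compute $\lambda=N$ with $I(\lambda)$ the level $y=1$, observe that $S_{a(\lambda)}=\{(0,1)\}$ admits no accessible transient state besides itself (forcing $\nu=\delta_{(0,1)}$), and invoke Theorems~\ref{thm_reduc_1} and~\ref{thm_reduc_2} for uniqueness and the Yaglom limit. Your explicit verification that the maximal classes $\{(x,1)\}$ are linearly ordered (securing geometric multiplicity one), your handling of the stray state $(N,0)$, and your remark upgrading uniqueness to all QSDs are welcome refinements of steps the paper leaves implicit, not a different argument.
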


\begin{proof}
Note that, in our case, each class $S_i$ is unitary, consisting of a single state, say $i = (x, y) \in S$.
The total rate at which the process leaves class $S_i =\{(x, y)\}$ is $N y$, and by~\eqref{F: MT Rates} it goes from this state to another state $(x^{\prime}, y^{\prime})$ only if either $x>x^{\prime}$ and $y<y^{\prime}$, or $x=x^{\prime}$ and $y>y^{\prime}$, implying that we can organize the classes in such a way that $S_1 \prec S_2 \prec \dots \prec S_L$. 
As the initial state is $(N, 1)$, we can take $S_L = \{(N, 1)\}$ and $S_1 = \{(0, 1)\}$, since for every state $(x, y) \in S$ there is a path to the state $(0, 1)$ if absorption has not occurred yet (see Figure \ref{Fig: MT-graph}).

We can now apply Theorem~\ref{thm_reduc_1}. 
The maximal eigenvalue for the generator matrix $\QQ$ is $-\lambda=-N$, given that the rate is minimal for states $(x, y)$ in the level $y=1$. Hence there are $N+1$ minimal classes $S_k$ such that $-\lambda_k = -N$.
Then, defining $I(\lambda) := \{k:\lambda_k=\lambda\}$ and $a(\lambda):=\min_k I(\lambda)$, it is straightforward to see that $S_{a(\lambda)}=S_1$. 
From Theorem \ref{thm_reduc_1}, we conclude that there exists a unique QSD $\nu$ with $\nu(x,y)>0$ if and only if $\nu(x,y)$ is accessible from the minimal class.
However, no state is accessible from $S_{a(\lambda)}$ besides itself. 
Consequently, $\nu = \delta_{(0, 1)}$, i.e., all the QSD probability mass is concentrated at the state $(0, 1)$.

Finally, $S_{a(\lambda)}=S_1$ is accessible from any initial distribution $\mu$ on $S$, so \eqref{F: QLD-MT} readily follows from Theorem \ref{thm_reduc_2}.
\end{proof}

Intuitively, the reason why the QSD $\nu$ has a trivial form arises from the fact that the time until absorption is probably too short.
States $(x, y)$ with $y=1$ have a maximal mean sojourn time among all transient states.
Besides that, comparing states at the level $y=1$, we see that $(0, 1)$ is the ``ultimate state'', in the sense that it is accessible from all other states.
Thus, when conditioning on non-absorption, the process gets concentrated on the state associated with the minimal class, namely, $(0, 1)$.

For the SIR epidemic model, \textcite{artalejo10} prove that the QSD is also concentrated at a single state.
As the authors comment, it is inaccurate to characterize the concentration of mass at a single point as a limitation of quasi-stationarity.
In this context, the analysis of the QSD lacks relevance because the absorption time is likely not long enough.
As an alternative, \textcite{artalejo10} use the RE-distribution to describe the behavior before absorption.
We discuss this topic for the MT model in Section~\ref{SS: RE Rumor}.

From now on, we pursue our investigation of QSDs by changing the absorption time of the process.
As we show next, one way to obtain a non-trivial QSD is to modify the conditioning by treating the set of states with $y=1$ as absorbing once the process leaves the initial state $(N, 1)$ (that is, the process is conditioned on not returning to its original level).
This adjustment ensures that the process spends more time in the initial stages of the dynamics, so that the minimal class becomes $S_{a(\lambda)}=\{(N,1)\}$, and the associated QSD assigns positive probability to all transient states.

Formally, suppose that $(X_0, Y_0) = (N, 1)$, and let
\begin{equation}
    \label{F: initial_time}
    \Xi = \inf\{t>0: Y_t \neq Y_0\}
\end{equation}
be the first jump epoch of the MT model.
Then we define a new absorption time by
\begin{equation}
\label{F: abs_time}
   \tao = \inf\{t > \Xi: Y_t = 1\},
\end{equation}
and the new set of transient states by
\begin{align*}
\ctmmt &= S \setminus \{(x, 1): x = 0, \dots, N - 1\}\\[0.1cm]%
 &= \{(N, 1)\} \cup \{ (x, y) \in \bbZ^2: 0 \leq x \leq N, 2 \leq y \leq N + 1 - x \}.
\end{align*}
Figure~\ref{Fig: MT-Modified} illustrates the new framework: transient and absorbing states are represented by black and red dots, respectively; former absorbing states (in cyan) are now excluded.
We prove that a non-trivial QSD exists for this setting.

\begin{figure}[!htb]
\centering
\includegraphics[scale=1]{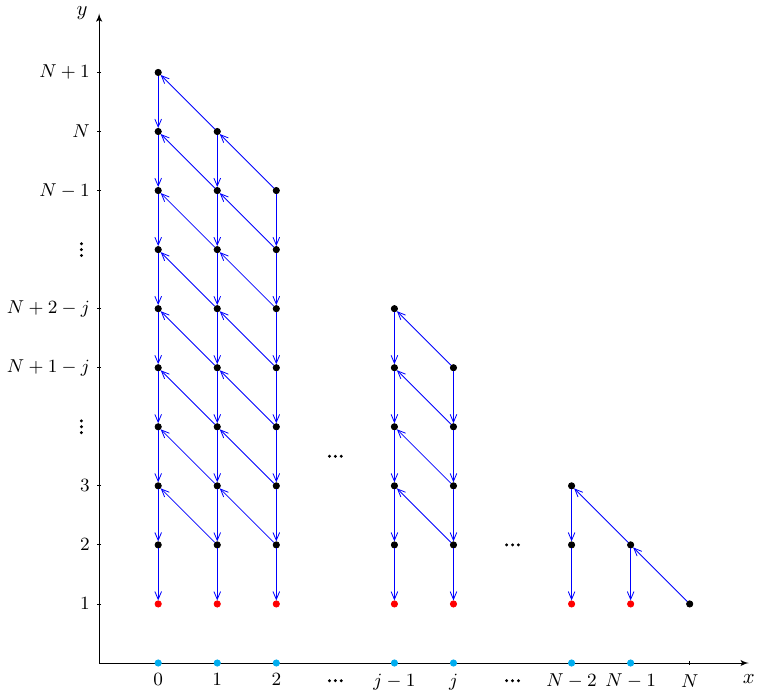}
\caption{The modified MT model.}
\label{Fig: MT-Modified}
\end{figure}

\begin{thm}
\label{T: QSD-MT-2}
Consider the hitting time defined in (\ref{F: abs_time}) for the MT model. Then there is a non-trivial measure $\qso$ such that for any $t>0$
\begin{equation}
\label{F: QSD-M}
 P_{\qso}(X_t=x,Y_t=y \dado \tao >t)=\qso(x,y), \, (x, y) \in \ctmmt,
\end{equation}
and $\qso(x,y)>0$ for every $(x, y) \in \ctmmt$. 
This QSD satisfies
\begin{equation}
\label{F: QLD-M}
  \lim_{t \to \infty}P_{(N,1)}(X_t=x,Y_t=y \dado \tao >t)=\qso(x,y), \, (x, y) \in \ctmmt,
\end{equation}
so it is a QLD for the initial distribution $\mu=\delta_{(N,1)}$. 
For any other initial distribution $\rho$ such that $\rho(N,1)=0$, we have that
\begin{equation}
\label{F: QLD-O}
  \lim_{t \to \infty}P_{\rho}(X_t=x,Y_t=y \dado \tao >t)=\delta_{(0,2)}, \, (x, y) \in \ctmmt.  
\end{equation}
\end{thm}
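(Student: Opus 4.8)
The plan is to cast the modified MT model into the reducible-chain framework of Section~\ref{SS: reducible} and then invoke Theorems~\ref{thm_reduc_1} and~\ref{thm_reduc_2}. First I would verify that, with the new absorption set, the chain on $\ctmmt$ is still strictly evolutionary with singleton communicating classes, so each state of $\ctmmt$ is its own class; the accessibility partial order is inherited from the transitions~\eqref{F: MT Rates}, restricted to $\ctmmt$. The key point is that after the first jump from $(N,1)$ the process sits in the region $\{y\ge 2\}\cup\{(N,1)\}$, and from $(N,1)$ the two transitions lead to $(N-1,2)$ (rate $N$) and to absorption at $(N,1)$... — here one must be slightly careful: the transition $(0,-1)$ from $(N,1)$ would go to $(N,0)\in\Delta$, while the event ``return to $y=1$'' is only triggered \emph{after} $\Xi$, so for the purpose of identifying classes the state $(N,1)$ has a single outgoing transient edge, to $(N-1,2)$. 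Consequently $(N,1)$ is accessible from no other transient state, so it is the \emph{last} class in the order; relabel so that $S_L=\{(N,1)\}$.

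Next I would locate the maximal eigenvalue. The total rate out of $(x,y)$ is $Ny$, so the slowest states are those with the smallest $y$ present in $\ctmmt$. Since every state with $y=1$ except $(N,1)$ has been removed, the minimal rate among transient states is attained at $(N,1)$ (rate $N$) and at the level $y=2$ (rate $2N$); hence $-\lambda=-N$ with $\lambda_k=N$ achieved \emph{only} at the single class $S_L=\{(N,1)\}$. Therefore $\card(I(\lambda))=1$, so $-\lambda$ has algebraic — hence geometric — multiplicity one, and $S_{a(\lambda)}=\{(N,1)\}$. Now $\{(N,1)\}$ is accessible from every transient state that can still reach it — but by the previous paragraph \emph{no} transient state other than $(N,1)$ can reach $(N,1)$, so $S_{a(\lambda)}$ is accessible only from $(N,1)$ itself, and from the initial distribution $\mu=\delta_{(N,1)}$. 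Theorem~\ref{thm_reduc_1} then yields a unique QSD $\qso$ (the normalized nonnegative solution of $\qso\QQ=-\lambda\qso$) from which $S_{a(\lambda)}$ is accessible, and by the characterization in that theorem $\qso(x,y)>0$ iff $(x,y)$ is accessible from $(N,1)$ — which, since the whole of $\ctmmt$ is reachable from $(N,1)$ before absorption, is every state of $\ctmmt$. This gives \eqref{F: QSD-M}, and Theorem~\ref{thm_reduc_2} applied with $\mu=\delta_{(N,1)}$ gives \eqref{F: QLD-M}.

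It remains to prove \eqref{F: QLD-O}, the degenerate limit for an initial law $\rho$ with $\rho(N,1)=0$. Here $S_{a(\lambda)}$ is \emph{not} accessible from $\rho$, so Theorem~\ref{thm_reduc_2} does not apply; instead I would work directly with the chain restricted to $\ctmmt\setminus\{(N,1)\}$, which is again strictly evolutionary with singleton classes. Among these states the rate $Ny$ is minimized exactly on the level $y=2$, and on that level the unique state accessible from all the others is $(0,2)$ (the transition $(-1,1)$ increases $y$, so the only way to stay on level $y=2$ is via... — rather, $(0,2)$ is a sink on the restricted chain in the sense that every $y=2$ state eventually reaches $(0,2)$ if it does not first leave level~$2$, while $(0,2)$'s only transient-preserving... ). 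More precisely: restricting the generator to $\ctmmt\setminus\{(N,1)\}$, the slowest class is $\{(0,2)\}$ with decay rate $2N$, it is unique with that rate, hence of multiplicity one, and it is accessible from every state of the restricted space; Theorems~\ref{thm_reduc_1}–\ref{thm_reduc_2} then give a unique QSD and Yaglom limit concentrated — but one must still check it is concentrated \emph{at a single point}, which follows because the eigenvector of a $1\times 1$ block $\QQ_{(0,2)}=(-2N)$ supported on the last class, pushed back through the strictly-evolutionary structure, collapses onto $\delta_{(0,2)}$ exactly as in the proof of Theorem~\ref{T: QSD-MT}. The main obstacle I anticipate is the bookkeeping in these two paragraphs: correctly describing the outgoing edges of $(N,1)$ under the modified killing (the subtlety that killing at $y=1$ is active only after $\Xi$), and verifying the ``unique sink on the slowest level'' claims for both the full modified chain and its restriction — once those accessibility facts are pinned down, everything is a direct citation of \textcite{van08} exactly as in Theorem~\ref{T: QSD-MT}.
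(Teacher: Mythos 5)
Your proposal follows essentially the same route as the paper: view the modified chain as a reducible chain with singleton classes, observe that $(N,1)$ is the only transient state with total rate $N$, so $\lambda=N$, $\card(I(\lambda))=1$ and $S_{a(\lambda)}=\{(N,1)\}$, then cite Theorem~\ref{thm_reduc_1} for \eqref{F: QSD-M} (support $=$ states accessible from $(N,1)$, i.e.\ all of $\ctmmt$) and Theorem~\ref{thm_reduc_2} with $\mu=\delta_{(N,1)}$ for \eqref{F: QLD-M}; for \eqref{F: QLD-O} the paper likewise discards $(N,1)$, takes $\lambda=2N$ with $S_{a(\lambda)}=\{(0,2)\}$, and repeats the argument of Theorem~\ref{T: QSD-MT}. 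A small simplification for your first paragraph: the $(0,-1)$ transition out of $(N,1)$ has rate $Y(N-X)=0$, so there is no timing subtlety about killing at $y=1$ before $\Xi$ to resolve — the only outgoing transition from $(N,1)$ is to $(N-1,2)$.

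One assertion in your treatment of \eqref{F: QLD-O} is wrong as stated: on $\ctmmt\setminus\{(N,1)\}$ the class $\{(0,2)\}$ is \emph{not} unique with decay rate $2N$; every state $(x,2)$, $0\le x\le N-1$, has total rate $2N$, so $\card(I(\lambda))=N$ and geometric multiplicity one does not follow from uniqueness of the slowest class. The correct justification is the one needed (implicitly) already in Theorem~\ref{T: QSD-MT}, where there are $N+1$ minimal classes: the maximal classes $\{(x,2)\}$ are linearly ordered by accessibility, since the path $(x,2)\to(x-1,3)\to(x-1,2)$ stays inside the transient set, and then \textcite[Theorem~6]{van08} gives geometric multiplicity one, with $S_{a(\lambda)}=\{(0,2)\}$ the last maximal class the restricted process can reach. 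With that repair, the remainder of your argument — no transient state other than itself is accessible from $(0,2)$, so the QSD and the Yaglom limit from any $\rho$ with $\rho(N,1)=0$ collapse to $\delta_{(0,2)}$ — coincides with the paper's proof.
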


\begin{proof}
    First, we note that since the process hits the level $y=1$ only one time, it follows that $\lambda=N$ and $S_{a(\lambda)}=\{(N,1)\}$. 
    From Theorem \ref{thm_reduc_1}, we conclude that there is a unique QSD $\qso$ from which $S_{a(\lambda)}$ is accessible, and $\qso(x,y)>0$ for every $(x, y) \in \ctmmt$, as all states are accessible from $\{(N, 1)\}$. This shows~\eqref{F: QSD-M}.

    If we take $\mu=\delta_{(N,1)}$ as the initial distribution, then $S_{a(\lambda)}$ is accessible from it, and \eqref{F: QLD-M} is an immediate consequence of Theorem~\ref{thm_reduc_2}.
    Notice that, also by Theorem \ref{thm_reduc_2}, for any other initial distribution $\rho$, if $S_{a(\lambda)}$ is not accessible, then we cannot guarantee that
    $$\lim_{t \to \infty}P_{\rho}(X_t=x,Y_t=y \dado \tao >t)=\qso(x,y), \, (x, y) \in \ctmmt.$$
    If $\rho(N,1)=0$, then $\lambda=2N$, since for any $\rho \not\equiv 0$ the level $y=2$ is accessible and $S_{a(\lambda)}=\{(0, 2)\}$. 
    Applying the same argument used in the proof of Theorem \ref{T: QSD-MT}, we obtain \eqref{F: QLD-O}.
\end{proof}

\begin{obs}
We underline that, even after changing the absorption set, if $P_\mu((X_0, Y_0) = (N, 1)) = 0$, then the QSD will be concentrated at a single state. 
This suggests that if the initial configuration is $(X_0, Y_0) = (x_0, y_0)$, then we will obtain a unique non-trivial QSD by conditioning on the event that the process does not return to the level $y=y_0$ after leaving it for the first time. 
That is, if $Y_0 \neq 1$ in \eqref{F: initial_time}, the absorption time to be considered in \eqref{F: abs_time} is
\begin{equation*}
   \tao = \inf\{t > \Xi: Y_t = Y_0 \text{ and } Y_s \geq Y_0 \text{ for } 0 \leq s \leq t\}.
\end{equation*}
In addition, when defining the corresponding QSD, we have to condition on the event that $\infty > \tao > t$.
We also observe that $\qso$ is precisely the probability measure that has the closest approximation of what would be a quasi-stationary behavior for the MT model.
\end{obs}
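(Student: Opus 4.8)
The plan is to read the Remark as three assertions plus one interpretive sentence, and to reduce each of the first three to machinery already in place.

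For the first assertion (degeneracy when $P_\mu((X_0,Y_0)=(N,1))=0$), I would simply point to \eqref{F: QLD-O}. Under the absorption time $\tao$ of \eqref{F: abs_time}, the states $(x,1)$ with $x<N$ are absorbing, so any $\mu$ with $\mu(N,1)=0$ is carried by the transient set $\{(x,y)\in\ctmmt: y\geq 2\}$. The proof of Theorem~\ref{T: QSD-MT-2} already shows that in this case the decay parameter is $\lambda=2N$ and the last maximal class is $\{(0,2)\}$, so by the argument of Theorem~\ref{T: QSD-MT} the Yaglom limit is $\delta_{(0,2)}$. Since a probability measure is a QSD if and only if it is a QLD (\textcite{meleard12}, Proposition~1), $\delta_{(0,2)}$ is the associated QSD, and the first claim needs nothing further.

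For the generalization to a general initial state $(x_0,y_0)$ with $y_0\neq 1$ (hence $y_0\geq 2$ and $x_0\leq N-1$), I would repeat the proof of Theorem~\ref{T: QSD-MT-2} mutatis mutandis. Take as transient set $\{(x_0,y_0)\}$ together with the states of level $y\geq y_0+1$ accessible from it, and declare absorption at the first return to level $y_0$. Because each jump either sends $(x,y)\mapsto(x-1,y+1)$ or $(x,y)\mapsto(x,y-1)$, the coordinate $x$ is non-increasing; thus $(x_0,y_0)$ is visited exactly once and every other transient state lies at level $y\geq y_0+1$. As the total rate at $(x,y)$ is $Ny$, the unique transient state of minimal rate is $(x_0,y_0)$, with rate $Ny_0$. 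Hence $\lambda=Ny_0$, $\card(I(\lambda))=1$, and $S_{a(\lambda)}=\{(x_0,y_0)\}$; Theorem~\ref{thm_reduc_1} then yields a unique QSD $\qso$ charging every transient state (all are accessible from $(x_0,y_0)$), and Theorem~\ref{thm_reduc_2} identifies it as the Yaglom limit from $\delta_{(x_0,y_0)}$, i.e.\ the analogues of \eqref{F: QSD-M}--\eqref{F: QLD-M} with $(N,1)$ replaced by $(x_0,y_0)$.

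The need to condition on $\{\tao<\infty\}$ is the one genuinely new point, and where I expect the main (though mild) obstacle. Unlike the case $(x_0,y_0)=(N,1)$, where the only transition out of the initial state is upward, for $x_0<N$ the first jump is downward with positive probability $(N-x_0)/N$; on that event the path drops below $y_0$ at once, the condition $Y_s\geq Y_0$ fails for all $s\geq\Xi$, and $\tao=\infty$. So $P_{(x_0,y_0)}(\tao=\infty)>0$, absorption is not almost sure, and the raw conditioning $P(\cdot\mid\tao>t)$ would retain this never-absorbed mass and degenerate as $t\to\infty$. Conditioning on $\{\infty>\tao>t\}$ discards the downward-first-jump paths and keeps exactly those trajectories that make a single excursion above $y_0$ and have not yet come back by time $t$. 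The step to make rigorous is that, on $\{\tao<\infty\}$, the process run until $\tao$ is precisely the Markov chain on this transient set absorbed at level $y_0$: this follows from the strong Markov property at $\Xi$, since $\{\tao<\infty\}$ agrees up to null sets with an upward first jump, after which the chain stays in $\{y\geq y_0\}$ until its first return to $y_0$. Theorems~\ref{thm_reduc_1} and~\ref{thm_reduc_2} then apply verbatim.

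The closing sentence is interpretive rather than a formal statement, so I would not attempt a proof: among conditionings that avoid the immediate collapse to a point mass of Theorem~\ref{T: QSD-MT}, forcing a single excursion above the initial level is the least restrictive modification of the absorption set that still charges the entire transient region, which is the sense in which $\qso$ best approximates a quasi-stationary profile for the MT model. I would leave this as a heuristic remark.
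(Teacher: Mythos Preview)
The paper does not supply a proof for this Remark; it is offered as an observation and a heuristic suggestion (note the phrasing ``This suggests that\ldots''), sandwiched between the proof of Theorem~\ref{T: QSD-MT-2} and the explicit-formula section. So there is nothing in the paper to compare against directly.

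That said, your proposal is a correct and natural way to substantiate the Remark, and it proceeds exactly in the spirit of the paper: you reduce the first assertion to \eqref{F: QLD-O}, which is already established inside the proof of Theorem~\ref{T: QSD-MT-2}; you handle the general initial state $(x_0,y_0)$ by rerunning the same argument based on Theorems~\ref{thm_reduc_1} and~\ref{thm_reduc_2}, observing that the total rate $Ny$ is uniquely minimized at $(x_0,y_0)$ among the new transient states so that $S_{a(\lambda)}=\{(x_0,y_0)\}$; and you correctly isolate the one new wrinkle, namely that the first jump from $(x_0,y_0)$ can be downward when $x_0<N$, forcing $\tao=\infty$ with positive probability and hence the conditioning on $\{\infty>\tao>t\}$. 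Your use of the independence of sojourn time and jump destination, together with the strong Markov property at $\Xi$, to identify the conditioned process with the absorbed chain on the reduced transient set is the right way to make this rigorous. The final sentence of the Remark is indeed interpretive, and you are right not to attempt a formal proof of it.
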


The next step is to try to find an explicit formula for the QSD $\qso$. This issue is addressed in the following section.

\subsection{Explicit formula for the non-trivial QSD}
\label{SS: QSD Formula}

To get an explicit formula for $\qso$, we first state the following auxiliary result.

\begin{lem}\label{L: lema-sum-exp}

Consider a sample $I_0, \dots, I_n$ of independent random variables such that $I_n \sim \DExp(\lambda_n)$, $n \geq 0$. Define $S_n = \sum_{i=0}^n I_i$, $n \geq 0$, and assume that $\lambda_0<\lambda_i$ for every $1 \leq i \leq n$.
Then for $t$ large enough, the density function of $S_n$ satisfies
\begin{equation*}
  f_{S_n}(t) \sim \pi_ne^{-\lambda_0 t}(1+o_n(t)),
\end{equation*}
where $\pi_n = \lambda_0 \prod_{j=1}^n \frac{\lambda_{j}}{\lambda_{j} - \lambda_0}, n \geq 1$ and $\pi_0 = \lambda_0$.
\end{lem}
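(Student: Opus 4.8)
The plan is to compute the density of $S_n$ exactly by induction on $n$ using a partial fraction decomposition of its Laplace transform, and then read off the leading-order asymptotics as $t\to\infty$. Since $I_0,\dots,I_n$ are independent exponentials with distinct rates $\lambda_0<\lambda_1,\dots,\lambda_n$ (the hypothesis $\lambda_0<\lambda_i$ guarantees $\lambda_j\neq\lambda_0$ for $j\geq1$; one may first treat the generic case where all $\lambda_i$ are pairwise distinct and then handle coincidences among $\lambda_1,\dots,\lambda_n$ separately, or simply note the slowest rate is what matters), the Laplace transform of $S_n$ is
\[
  \mathbb{E}\bigl[e^{-sS_n}\bigr]=\prod_{j=0}^n\frac{\lambda_j}{\lambda_j+s}.
\]
Decomposing this rational function into partial fractions, $\prod_{j=0}^n\frac{\lambda_j}{\lambda_j+s}=\sum_{k=0}^n\frac{c_k}{\lambda_k+s}$, one inverts termwise to get $f_{S_n}(t)=\sum_{k=0}^n c_k e^{-\lambda_k t}$, where the residue at $s=-\lambda_k$ is $c_k=\lambda_k\prod_{j\neq k}\frac{\lambda_j}{\lambda_j-\lambda_k}$.

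The key step is then to isolate the $k=0$ term, which dominates because $\lambda_0$ is strictly the smallest rate: as $t\to\infty$,
\[
  f_{S_n}(t)=c_0 e^{-\lambda_0 t}+\sum_{k=1}^n c_k e^{-\lambda_k t}=c_0 e^{-\lambda_0 t}\Bigl(1+\sum_{k=1}^n\tfrac{c_k}{c_0}e^{-(\lambda_k-\lambda_0)t}\Bigr),
\]
and since each $\lambda_k-\lambda_0>0$ for $k\geq1$, the bracketed correction is $1+o(1)$; writing it as $1+o_n(t)$ matches the statement. It remains to identify $c_0$ with $\pi_n$: from the residue formula, $c_0=\lambda_0\prod_{j=1}^n\frac{\lambda_j}{\lambda_j-\lambda_0}$, which is exactly $\pi_n$ for $n\geq1$, and for $n=0$ the product is empty so $c_0=\lambda_0=\pi_0$. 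The induction on $n$ is essentially bookkeeping: passing from $S_{n-1}$ to $S_n=S_{n-1}+I_n$ corresponds to convolving $f_{S_{n-1}}$ with $\lambda_n e^{-\lambda_n t}$, i.e.\ multiplying the Laplace transform by $\lambda_n/(\lambda_n+s)$, and one checks directly that this updates the residues from the $(n-1)$-level formula to the $n$-level formula.

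The main obstacle — or rather the only point requiring care — is the possibility that some of $\lambda_1,\dots,\lambda_n$ coincide with each other, which would force higher-order poles and polynomial-times-exponential terms in the decomposition. However, this does not affect the conclusion: any repeated rate $\lambda_k$ with $k\geq1$ still satisfies $\lambda_k>\lambda_0$, so the corresponding terms (even if of the form $t^{m}e^{-\lambda_k t}$) are still $o(e^{-\lambda_0 t})$, and the residue at the simple pole $s=-\lambda_0$ is unchanged. Thus one can either restrict to the case of distinct rates for the clean partial-fraction argument and remark that the general case follows a fortiori, or carry the general Heaviside expansion throughout; either way the leading coefficient $\pi_n$ and the asymptotic $f_{S_n}(t)\sim\pi_n e^{-\lambda_0 t}$ are obtained.
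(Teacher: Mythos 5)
Your proposal is correct, but it takes a genuinely different route from the paper. You invert the Laplace transform $\prod_{j=0}^n \lambda_j/(\lambda_j+s)$ by partial fractions to get the exact hypoexponential density $f_{S_n}(t)=\sum_k c_k e^{-\lambda_k t}$ with $c_k=\lambda_k\prod_{j\neq k}\lambda_j/(\lambda_j-\lambda_k)$, and then simply observe that the $k=0$ term dominates because $\lambda_0$ is strictly the smallest rate, with $c_0=\pi_n$; your separate treatment of possible coincidences among $\lambda_1,\dots,\lambda_n$ (higher-order poles giving $t^m e^{-\lambda_k t}$ terms, still $o(e^{-\lambda_0 t})$, with the simple pole at $-\lambda_0$ untouched) is a welcome addition, since the lemma's hypotheses do allow repeated rates among the $\lambda_i$, $i\geq 1$. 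The paper instead argues by induction directly on the convolution integrals: it computes the $n=1$ case exactly and, in the inductive step, splits $\int_0^t f_{S_n}(t-s)f_{I_{n+1}}(s)\,ds$ at a point $t-M_t$ and tracks the error terms $o_n(\cdot)$ through the two pieces. Your argument buys an exact closed form and a cleaner, essentially one-line asymptotic step, avoiding the somewhat delicate bookkeeping of the $M_t$-splitting; the paper's convolution induction is more elementary (no transform inversion) and is structured so that the same convolution manipulation reappears verbatim in the proof of the main QSD formula, where $f_{S_{L-1}}$ is convolved with the tail of the final sojourn time. Either way the leading coefficient $\pi_n$ and the asymptotics $f_{S_n}(t)\sim\pi_n e^{-\lambda_0 t}$ agree.
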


\begin{proof}
We proceed by induction on $n$. 
The result is trivial for $n = 0$. 
For $n=1$,
\begin{align*}
f_{S_1}(t) &= \int_0^t f_{I_0}(t-s) f_{I_{1}}(s) ds\\
     &= \int_0^t\lambda_0e^{-\lambda_0(t-s)}\lambda_1e^{-\lambda_1s}ds \\
     &= \frac{\lambda_0\lambda_1}{\lambda_1-\lambda_0} e^{-t\lambda_0}\int_0^t (\lambda_1-\lambda_0) e^{-s (\lambda_1-\lambda_0)}ds\\
     &= \pi_1 e^{-\lambda_0 t}(1-e^{-t(\lambda_1-\lambda_0)})=\pi_1 e^{-\lambda_0 t}(1+o_1(t)).
\end{align*}
Since $\lambda_0<\lambda_1$, the result is true for $n=1$.
Now, assume it holds for $n$, and note that $S_{n+1}=S_n+I_{n+1}$. Then,
\begin{align*}
f_{S_{n+1}}(t) &= \int_0^t f_{S_{n}}(t-s) f_{I_{n+1}}(s) ds\\           &= \int_0^t\pi_ne^{-\lambda_0(t-s)}[1+o_n(t-s)]\lambda_{n+1}e^{-\lambda_{n+1}s}ds \\
     &= \pi_n\lambda_{n+1}e^{-\lambda_0 t} \int_0^t[1+o_n(t-s)]e^{-s(\lambda_{n+1}-\lambda_0)}ds.
\end{align*}
Now consider $M_t$ such as $t-M_t$ is small, thus
\begin{align*}
    f_{S_{n+1}}(t) &= \pi_n \lambda_{n+1}e^{-\lambda_0t} \int_0^{t-M_t}[1+o_n(t-s)]e^{-s(\lambda_{n+1}-\lambda_0)}ds \\ 
    &\phantom{=} + \pi_n\lambda_{n+1}e^{-\lambda_0t}\int_{t-M_t}^t[1+o_n(t-s)]e^{-s(\lambda_{n+1}-\lambda_0)}ds.
\end{align*}
Notice that for $t>0$,
\begin{align*}
&\int_0^{t-M_t}[1+o_n(t-s)]e^{-s(\lambda_{n+1}-\lambda_0)}ds \sim \frac{1}{\lambda_{n+1}-\lambda_0}(1-e^{-(\lambda_{n+1}-\lambda_0)(t-M_t)})[1+o_{n+1}(M_t)],
\end{align*}
and
\begin{align*}
  &\int_{t-M_t}^t[1+o_n(t-s)]e^{-s(\lambda_{n+1}-\lambda_0)}ds \sim \frac{1}{\lambda_{n+1}-\lambda_0}(e^{-(\lambda_{n+1}-\lambda_0)(t-M_t)}-e^{-(\lambda_{n+1}-\lambda_0)t})[1+o_{n+1}(M_t)].   
\end{align*}
Since $M_t$ is big and again $\lambda_0 < \lambda_{n+1}$, we obtain that
\begin{align*}
  f_{S_{n+1}}(t) &\sim \frac{\pi_n \lambda_{n+1}}{\lambda_{n+1}-\lambda_0}e^{-\lambda_0t}(1-e^{-(\lambda_{n+1}-\lambda_0)t})[1+o_{n+1}(M_t)] \\
  &\sim \pi_{n+1}e^{-\lambda_0t}(1+o_{n+1}(t)).\qedhere 
\end{align*}
\end{proof}

Before stating the main result, we use the following definitions.

\begin{defn}
\label{D: Path}
Let $\ctran = \{ (-1, 1), (0, -1) \}$ be the set of possible transitions of the MT model.
For a fixed $(x,y) \in \ctmmt$, a \textbf{path} $\cam$ running from $(N, 1)$ to $(x,y)$ is a sequence of states
\begin{equation}
\label{F: Path}    
v_0  = (x_0, y_0) = (N, 1), 
v_1 = (x_1, y_1), 
\dots, 
v_L  = (x_L, y_L) = (x, y),
\end{equation}
where, for each $j \geq 0$, $v_j \in \ctmmt$ and the increment $v_{j+1} - v_j \in \ctran$.
Let $\ccam$ denote the set of all paths from $(N, 1)$ to $(x,y)$ with all vertices in $\ctmmt$ and steps in $\ctran$.
\end{defn}

Notice that any path $\cam \in \ccam$ has $N-x$ transitions of type $(-1, 1)$, and $N-x-y+1$ transitions of type $(0, -1)$, hence
\begin{equation}
\label{F: path_size}
   L = 2N-2x-y+1. 
\end{equation}

\begin{obs} 
We observe that counting the number of paths in $\ccam$ is equivalent to solving the ballot problem with $a=N-x$ and $b=N+1-x-y$. See \textcite[Section III.1]{feller68} for more details. Then, for $0 \leq x \leq N$ and $2 \leq y \leq N + 1 - x$,
\begin{equation*}
|\ccam| = \dfrac{y-1}{L} \binom{L}{N-x}.
\end{equation*}
Figure \ref{Fig: Paths} illustrates examples of paths in $\Gamma_{(3, 5)}$ for the MT model with $N=9$.
\end{obs}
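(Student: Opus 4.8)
The plan is to convert a path into a word of $\pm1$ increments and then invoke the classical ballot theorem. Encode $\cam\in\ccam$ by its sequence of increments $\varepsilon_1,\dots,\varepsilon_L\in\ctran$, identifying each step of type $(-1,1)$ with the symbol $+1$ and each step of type $(0,-1)$ with the symbol $-1$, viewed as the increments of the second coordinate. Counting letters: since the first coordinate is lowered only by $(-1,1)$-steps and ends at $x$, the word has exactly $a:=N-x$ copies of $+1$; since the second coordinate runs from $1$ to $y$, it has $b:=a-(y-1)=N+1-x-y$ copies of $-1$; hence the word has length $a+b=2N-2x-y+1=L$, matching~\eqref{F: path_size}.

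The key step is to recast the constraint ``$v_j\in\ctmmt$ for all $j$'' as a constraint on the partial sums $s_j:=\varepsilon_1+\cdots+\varepsilon_j$. I would first observe that along \emph{any} such word the inequalities $0\le x_j\le N$ and $x_j+y_j\le N+1$ that enter the definition of $\ctmmt$ hold automatically: the first coordinate is non-increasing from $N$ and never drops below its terminal value $x\ge0$, and the sum of the two coordinates is non-increasing from $N+1$. Therefore the only active requirement is $y_j\ge2$ for $j=1,\dots,L$; and since the second coordinate of $v_j$ equals $1+s_j$, this is precisely $s_j\ge1$, i.e.\ $s_j>0$, for all $j=1,\dots,L$. (In particular a leading $-1$ is forbidden, reflecting the fact that $(N,1)$ can only jump to $(N-1,2)$; this is automatically enforced by the positivity of $s_1$, so the first step needs no separate treatment.)

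With this reduction, $|\ccam|$ is exactly the number of arrangements of $a$ symbols $+1$ and $b$ symbols $-1$ all of whose partial sums $s_1,\dots,s_L$ are strictly positive. By Bertrand's ballot theorem in the form of \textcite[Section~III.1]{feller68}, this count equals $\frac{a-b}{a+b}\binom{a+b}{a}$. Substituting $a-b=y-1$, $a+b=L$ and $\binom{a+b}{a}=\binom{L}{N-x}$ gives
\[
|\ccam|=\frac{y-1}{L}\binom{L}{N-x},
\]
as claimed; note this is positive precisely because $y\ge2$, consistent with $(x,y)$ being reachable inside $\ctmmt$.

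I do not expect a genuine obstacle: once the encoding is fixed, the result is an immediate application of the ballot theorem. The one point demanding care is the middle step — checking that, among the three defining inequalities of $\ctmmt$, only $y\ge2$ is binding along an admissible word, so that the enumeration is governed purely by the strict positivity of the partial sums, with no upper barrier on the spreader coordinate nor any floor on the ignorant coordinate entering the count.
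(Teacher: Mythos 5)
Your proposal is correct and follows essentially the same route as the paper, which simply reduces the count to Bertrand's ballot problem with $a=N-x$ and $b=N+1-x-y$ and cites \textcite[Section~III.1]{feller68}; your write-up just makes the encoding explicit (increments of the spreader coordinate, with the constraints $0\le x_j\le N$ and $x_j+y_j\le N+1$ holding automatically, so that only strict positivity of the partial sums is binding). The substitution $a-b=y-1$, $a+b=L$ then gives $\frac{y-1}{L}\binom{L}{N-x}$ exactly as in the remark.
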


\begin{defn}
\label{D: Rates-Path}
Consider $\cam$ the path given by \eqref{F: Path}.
Let $\lambda_0 = \rho_0 = N$.
Also for each $j = 1, \dots, L-1$, we define $\lambda_j = N y_j$, and
\begin{equation}
\label{F: rho}
 \rho_j  = 
\left\{
\begin{array}{cl}
x_j \, y_j &\text{if } v_{j+1} - v_j = (-1, 1),\\[0.2cm]
y_j (N - x_j) &\text{if } v_{j+1} - v_j = (0, -1).
\end{array}	\right.   
\end{equation}
Then, for all $j$, $\lambda_j$ is the total rate of state $v_j$, and $\rho_j$ is the infinitesimal rate associated to the transition $v_j \to v_{j+1}$.
\end{defn}

\begin{figure}[!htb]
\centering
\includegraphics[scale=1]{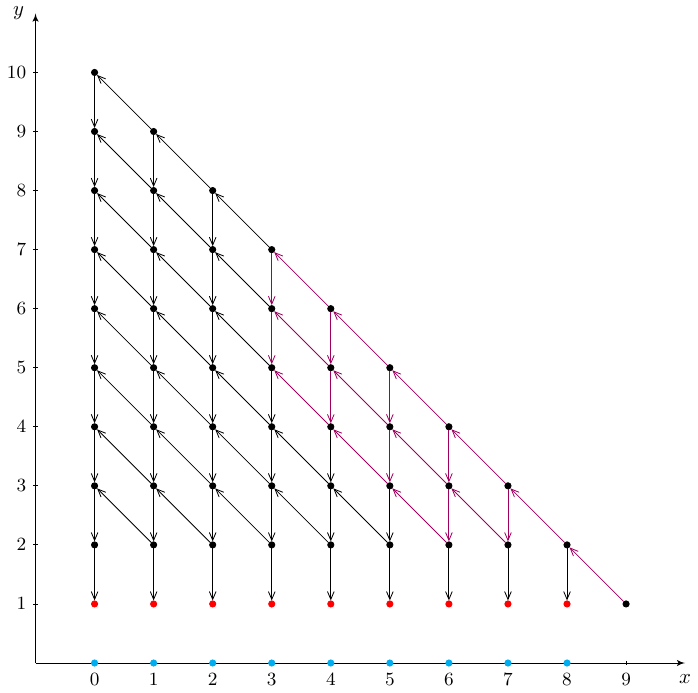}
\caption{All paths from $(9,1)$ to $(3,5)$ for the modified MT model with $N=9$.}
\label{Fig: Paths}
\end{figure}

The next theorem presents an explicit formula for the non-trivial QSD for the MT model.
\begin{thm}
\label{T: QSD_MT_3}
Consider the MT model with absorption time defined by~\eqref{F: abs_time}. 
Then, the non-trivial QSD satisfying \eqref{F: QSD-M} and \eqref{F: QLD-M} is given by
\begin{equation}
\label{F: formula_QSD}
\qso(x,y) = 
\left\{
\begin{array}{cl}
C_N^{-1} &\text{if } (x, y) = (N, 1),\\[0.1cm]
C_N^{-1}\frac{\lambda_0}{\lambda_1-\lambda0} &\text{if } (x, y) = (N-1, 2),\\[0.1cm]
C_N^{-1} \displaystyle{\sum_{\cam \in \ccam}\frac{\lambda_0}{\lambda_L-\lambda_0} \prod_{j=1}^{L-1} \dfrac{\rho_j}{\lambda_{j} - \lambda_0}} &\text{otherwise},
\end{array}	\right. 
\end{equation}
where $C_N$ is the normalizing constant defined in such a way that
\[ \sum_{(x, y) \in \ctmmt} \qso(x,y) = 1. \]
\end{thm}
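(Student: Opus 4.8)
The strategy is to combine the characterization of the QSD from Theorem~\ref{thm_reduc_1}—namely that $\qso$ is the unique normalized nonnegative left eigenvector of $\QQ$ for the eigenvalue $-\lambda = -N$—with the probabilistic interpretation of $\qso$ as a Yaglom limit provided by Theorem~\ref{thm_reduc_2} and \eqref{F: QLD-M}. I would work with the embedded jump chain together with the exponential holding times: starting from $(N,1)$, the process follows a path $\cam = (v_0, v_1, \dots, v_L)$ with $v_L = (x,y)$ with probability $\prod_{j=0}^{L-1} \rho_j / \lambda_j$ (using $\lambda_0 = \rho_0 = N$, noting that from $(N,1)$ the only move is to $(N-1,2)$ so $\rho_0 = \lambda_0 = N$), and conditionally on following $\cam$, the time $\mcX_t$ spends to reach $v_L$ is a sum $S_{L-1} = \sum_{j=0}^{L-1} I_j$ of independent exponentials with $I_j \sim \DExp(\lambda_j)$. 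Crucially, since every state on a path other than $(N,1)$ lies at level $y_j \geq 2$, we have $\lambda_j = N y_j \geq 2N > N = \lambda_0$ for $j \geq 1$, so the hypothesis $\lambda_0 < \lambda_j$ of Lemma~\ref{L: lema-sum-exp} holds.

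First I would write, for fixed $(x,y) \neq (N,1)$ and $t$ large,
\begin{align*}
P_{(N,1)}(X_t = x, Y_t = y, \tao > t) = \sum_{\cam \in \ccam} \Bigl(\prod_{j=0}^{L-1} \frac{\rho_j}{\lambda_j}\Bigr)\, P\bigl(S_{L-1} \leq t < S_{L-1} + I_L\bigr),
\end{align*}
where $I_L \sim \DExp(\lambda_L)$ is the holding time at $v_L = (x,y)$, independent of $S_{L-1}$; here I am using the strictly evolutionary property, so $\mcX_t = (x,y)$ at time $t$ (with $\tao > t$) exactly when the process has traversed some path to $(x,y)$ and is still sitting there. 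Then $P(S_{L-1} \leq t < S_{L-1} + I_L) = \int_0^t f_{S_{L-1}}(s)\, e^{-\lambda_L(t-s)}\, ds$, and plugging in $f_{S_{L-1}}(s) \sim \pi_{L-1} e^{-\lambda_0 s}$ from Lemma~\ref{L: lema-sum-exp} and evaluating the elementary integral gives the asymptotics $\sim \frac{\pi_{L-1}}{\lambda_L - \lambda_0} e^{-\lambda_0 t}$ as $t \to \infty$. Expanding $\pi_{L-1} = \lambda_0 \prod_{j=1}^{L-1} \lambda_j/(\lambda_j - \lambda_0)$ and multiplying by the path probability $\prod_{j=0}^{L-1} \rho_j/\lambda_j = \prod_{j=1}^{L-1} \rho_j/\lambda_j$ (since $\rho_0/\lambda_0 = 1$), the factors $\lambda_j$ in $\pi_{L-1}$ cancel against the $\lambda_j$ in the denominators of the path probability, leaving
\begin{align*}
P_{(N,1)}(X_t = x, Y_t = y, \tao > t) \sim \Bigl(\sum_{\cam \in \ccam} \frac{\lambda_0}{\lambda_L - \lambda_0} \prod_{j=1}^{L-1} \frac{\rho_j}{\lambda_j - \lambda_0}\Bigr) e^{-\lambda_0 t}.
\end{align*}
The case $(x,y) = (N-1,2)$ is the $L = 1$ instance ($\ccam$ is a single path with no intermediate vertices), giving $C_N^{-1} \lambda_0/(\lambda_1 - \lambda_0)$ directly, and $(x,y) = (N,1)$ is the $L = 0$ instance where $P_{(N,1)}(X_t = N, Y_t = 1, \tao > t) = e^{-\lambda_0 t}$ exactly. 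Finally, dividing by $P_{(N,1)}(\tao > t) = \sum_{(x,y)} P_{(N,1)}(X_t = x, Y_t = y, \tao > t)$ and letting $t \to \infty$, the common factor $e^{-\lambda_0 t}$ cancels, and by \eqref{F: QLD-M} the limit is $\qso(x,y)$; this forces $\qso$ to be proportional to the bracketed expressions, and normalizing by $C_N = \sum_{(x,y)} (\cdots)$ yields \eqref{F: formula_QSD}.

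The main obstacle is making the asymptotic manipulations rigorous: Lemma~\ref{L: lema-sum-exp} is stated with a somewhat informal $o_n(t)$ notation, and I would need to control the error terms uniformly enough to justify interchanging the sum over the finitely many paths in $\ccam$ with the $t \to \infty$ limit and to justify substituting the asymptotic form of $f_{S_{L-1}}$ inside the convolution integral $\int_0^t f_{S_{L-1}}(s) e^{-\lambda_L(t-s)}\,ds$. Since $\ccam$ is finite and each path contributes a genuinely exponential-tailed density (one can in fact compute $f_{S_{L-1}}$ exactly as a finite linear combination of exponentials $e^{-\lambda_j s}$ via partial fractions, all with rates $\geq \lambda_0$ and a nonzero $e^{-\lambda_0 s}$ coefficient equal to $\pi_{L-1}$), this is a matter of routine but careful bookkeeping rather than a genuine difficulty; alternatively one can bypass Lemma~\ref{L: lema-sum-exp} entirely and use the exact partial-fraction expansion of each $f_{S_{L-1}}$, which makes every limit manifestly legitimate. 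A secondary point worth checking is that the eigenvector characterization in Theorem~\ref{thm_reduc_1} already guarantees existence and uniqueness of $\qso$, so the computation above need only identify the limit, not establish that it exists.
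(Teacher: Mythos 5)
Your proposal is correct and follows essentially the same route as the paper's proof: a decomposition over the finitely many paths in $\ccam$ weighted by the embedded-chain probabilities $\rho_j/\lambda_j$, the convolution of $f_{S_{L-1}}$ (via Lemma~\ref{L: lema-sum-exp}) with the exponential holding time at $(x,y)$, cancellation of the $\lambda_j$'s, and identification of the limit through Theorem~\ref{T: QSD-MT-2}. The only differences are cosmetic: the paper encodes the path probabilities through auxiliary variables $H_n$ that equal $I_n$ with probability $p_n$ and $\infty$ otherwise, and it is less explicit than you are about verifying $\lambda_0<\lambda_j$ on the modified state space and about the $o_n(t)$ error-term bookkeeping.
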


\begin{proof}
Note that the process remains in a given state for a random exponential time before choosing a new location. Consider then $I_n \sim \DExp(\lambda_n)$, $n \geq 0$, and define a random variable $H_n$ such that for $0 \leq p_n \leq1$,
$$P(H_n=I_n)=p_n=1-P(H_n=\infty).$$
Now, let $I_n$ be the time the process spends at the $n$-th state and $\rho_n$ its rate as defined in \eqref{F: rho}. Surviving up to time $t>0$ means that the process reached $(x,y)$, and absorption has not occurred yet. Note that reaching $(x,y)$ means that one of the paths $\cam \in \ccam$ was chosen. Then, 
\[ P(X_t = x, Y_t = y, \tao > t) = P\left(\{X_t = x, Y_t = y, \tao > t\} \bigcap \bigcup_{\cam \in \ccam}\{(X_t, Y_t) \text{ follows } \cam\}\right). \]
Setting $S_L = \sum_{i=0}^L I_i$,
{\allowdisplaybreaks
\begin{align*}
    P(X_t = x, Y_t = y, \tao > t) &= \sum_{\cam \in \ccam}P\left(\sum_{i=0}^{L-1}H_i \leq t,I_L>t \right) \\
    &=\sum_{\cam \in \ccam}P\left(\bigcap_{i=0}^{L-1}\{H_i=I_i\},S_{L-1}\leq t, I_L > t \right)\\
    &=\sum_{\cam \in \ccam}p_0 \ p_1 \dots p_{L-1}P\left(S_{L-1}\leq t, I_L > t \right)\\
    &= \sum_{\cam \in \ccam} p_0 \ p_1 \dots p_{L-1}\int_0^tP\left(I_L > t-s \right)f_{S_{L-1}}\left(s\right) ds.
\end{align*}}%
 Then, since $\lambda_0=N$ is the smaller rate of change, by Lemma \ref{L: lema-sum-exp}, for $t>0$ sufficiently large
\begin{align*}
    P(X_t = x, Y_t = y, \tao > t) \sim \sum_{\cam \in \ccam} p_0 \ p_1 \dots p_{L-1}\int_0^t\pi_{L-1} e^{-\lambda_0 s}(1+o_{L-1}(s))e^{-\lambda_L(t-s)}ds
\end{align*}
with $\pi_{L-1} = \lambda_0 \prod_{j=1}^{L-1} \frac{\lambda_{j}}{\lambda_{j} - \lambda_0}, L \geq 2$, $\pi_0=\lambda_0$. Then,
\begin{align*}
   P(X_t = x, Y_t = y, \tao > t) & \sim \sum_{\cam \in \ccam} \frac{p_0 \ p_1 \dots p_{L-1}}{\lambda_L}\int_0^t \pi_{L-1}e^{-\lambda_0 s}(1+o_{L-1}(s))\lambda_Le^{-\lambda_L(t-s)}ds\\&\sim  \sum_{\cam \in \ccam} \frac{p_0 \ p_1 \dots p_{L-1}}{\lambda_L}e^{-\lambda_0 t}(1+o_L(t))\lambda_0 \prod_{j=1}^{L} \frac{\lambda_{j}}{\lambda_{j} - \lambda_0} 
   \\&\sim  e^{-\lambda_0 t}(1+o_L(t))\sum_{\cam \in \ccam} \frac{\lambda_0}{\lambda_{L} - \lambda_0} \prod_{j=1}^{L-1} \frac{\rho_{j}}{\lambda_{j} - \lambda_0} .
\end{align*}
Where $\rho_j$ is defined as \eqref{F: rho}. Now, note that $$P(\tao > t) \sim e^{-\lambda_0 t}(1 + o_N(t))C_N,$$
with $C_N$ being a constant that depends only on the population size. Therefore,
$$\lim_{t\to\infty} P(X_t = x, Y_t = y \dado \tao > t)= C_N^{-1}\sum_{\cam \in \ccam}\frac{\lambda_0}{\lambda_{L} - \lambda_0}\prod_{j=1}^{L-1} \frac{\rho_j}{\lambda_j - \lambda_0}.$$
By Theorem \ref{T: QSD-MT-2}, since $\mu = \delta_{(N,1)}$, \eqref{F: formula_QSD}
is the unique non-trivial QSD for the MT model.
\end{proof}

This result provides an explicit formula for the QSD, although it involves recurrences based on the rates and probability transitions. We need to consider all the paths to state $(x,y)$ when calculating the probability, which is not an easy task for $N$ large.
The simplest probabilities are related to the case where the process never loses a spreader until attaining the state $(x, y)$ with $y = N+1-x$. 
In this setting, we can find a better expression for the probabilities, since there is only one possible path.
From \eqref{F: path_size} and \eqref{F: rho}, $L = y-1$ and $\rho_j = x_jy_j, \ j=0,\dots,L$. Then, using \eqref{F: formula_QSD}, up to the normalizing constant,
{\allowdisplaybreaks
\begin{align*}
  \qso(x,N+1-x) &= \frac{\lambda_0}{\lambda_{L}-\lambda_0} \frac{\rho_1}{\lambda_1-\lambda_0}\frac{\rho_2}{\lambda_2-\lambda_0}\dots \frac{\rho_{L-2}}{\lambda_{L-2}-\lambda_0}\frac{\rho_{L-1}}{\lambda_{L-1}-\lambda_0}\\
  &=\frac{N}{yN-N} \frac{2(N-1)}{2N-N}\frac{3(N-2)}{3N-N}\dots \frac{(y-2)(N-y+3)}{(y-2)N-N}\frac{(y-1)(N-y+2)}{(y-1)N-N}\\
  &=\frac{1}{(y-1)} 2\frac{(N-1)}{N}\frac{3}{2}\frac{(N-2)}{N}\dots \frac{(y-2)}{(y-3)}\frac{(N-y+3)}{N}\frac{(y-1)}{(y-2)}\frac{(N-y+2)}{N}\\
  &=\frac{N-1}{N}\frac{N-2}{N}\frac{N-3}{N}\dots\frac{N-y+2}{N}.
\end{align*}}%
Therefore, since $N-y+2 = N - (N-x-1)$,
\begin{equation}\label{F: ex_qsd_1}
    \qso(x,N+1-x)= \prod_{j=1}^{N-x-1}\left(\frac{N-j}{N}\right).
\end{equation}
We can rewrite \eqref{F: ex_qsd_1} as
\begin{equation}\label{F: ex_qsd_2}
\qso(x,N+1-x)=\frac{1}{N^{N-x-1}}\frac{(N-1)!}{x!}=\frac{1}{N^{N-x}}\frac{N!}{x!}.
\end{equation}
This gives us a clearer expression. From \eqref{F: ex_qsd_2} we can get that the probability of never losing a spreader goes to zero very fast as $N$ grows. To see this, use Stirling's formula to approximate $N!$, that is, $N! \sim \sqrt{2\pi N} (N/e)^N$. We get then
$$\qso(x,N+1-x) \sim \frac{e^{-N}}{N^{N-x}}\frac{N^N}{x!}\sqrt{2\pi N} =\sqrt{2\pi N}\frac{N^{x}e^{-N}}{x!}.$$ 
Observe that $\frac{N^{x}e^{-N}}{x!}$ is the probability mass function of a Poisson random variable with mean $N$ at the point $x$. 
For large enough $N$, this is approximately equal to the probability that a $\DNormal(N,N)$ random variable lies in the interval $[x-1/2, x+1/2]$.
Consequently,
$$\qso(x,N+1-x) \sim \sqrt{2 \pi N}\frac{1}{\sqrt{2 \pi N}}e^{-\frac{(x-N)^2}{2N}}=e^{-\frac{(x-N)^2}{2N}}.$$
Writing $\xx=x/N \in [0,1]$, we obtain for a fixed $\xx$,
\begin{equation}\label{F: ex_qsd_3}
  \qso(N\xx,N+1-N\xx) \sim e^{-N\frac{(1-\xx)^2}{2}}.  
\end{equation}
Now, it is straightforward to see that $\eqref{F: ex_qsd_3}$ goes to zero when $N$ grows. For other states, the analysis is trickier, since the transition rates change according to the paths. Figure~\ref{Fig: QSD_200} shows the graph of the QSD for the MT model with $N=200$. We can see that the graph has a peak concentrated around state $(X=38,Y=2)$. There is also a small curve that is very close to zero, but still significant, which corresponds to the darker gray area. This suggests that there are paths that survive and paths that vanish when $N$ grows. The white area is not part of the support of the QSD.

\begin{figure}[!htb]
    \centering
    \includegraphics[width=0.6\linewidth]{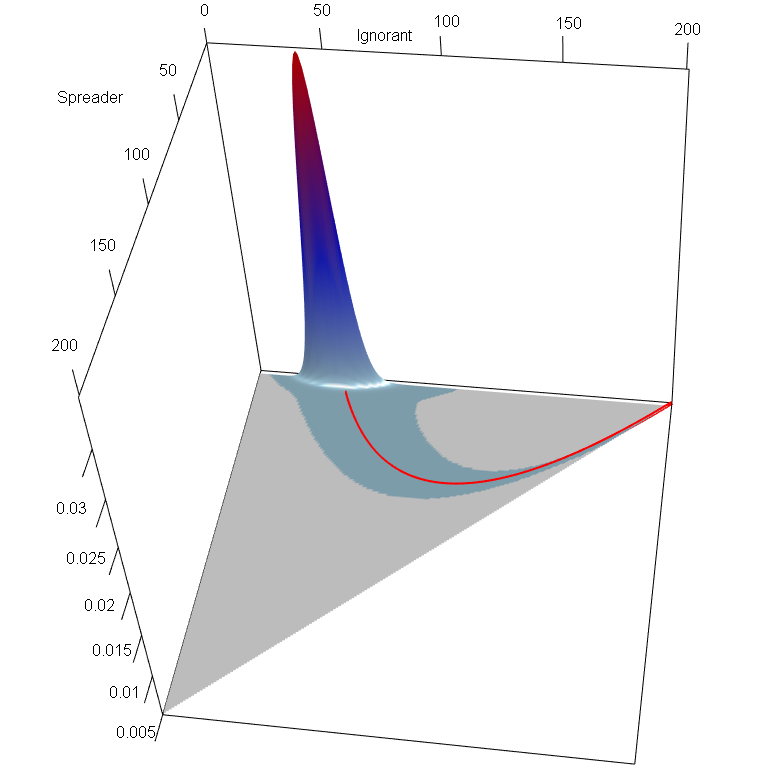}
    \caption{QSD $\qso$ for the MT model with $(X_0=200, Y_0=1)$. Values smaller than $10^{-10}$ are omitted.}
    \label{Fig: QSD_200}
\end{figure}

The shape on the basis of the graph resembles the deterministic curve for the MT model (see \textcite{lebensztayn11a}), which is shown as the red line
on the graph. The curve is the plot of $y=f(x)=\ln(x)-2x+2, \ x \in (0,1)$, rescaled for the size of population $N+1$. The solution of $f(x)=x$ is equal to the final proportion of ignorants for the MT model, when the population size goes to infinity. 
To investigate further the behavior when $N$ goes to infinity, a better understanding of the recurrent formula would be needed. Figure \ref{Fig: Log_QSD} shows the log of the distribution for every point of the support. We can see that the curve path has a bigger probability, as compared to the rest of the support.

\begin{figure}[!htb]
    \centering
    \includegraphics[width=0.6\linewidth]{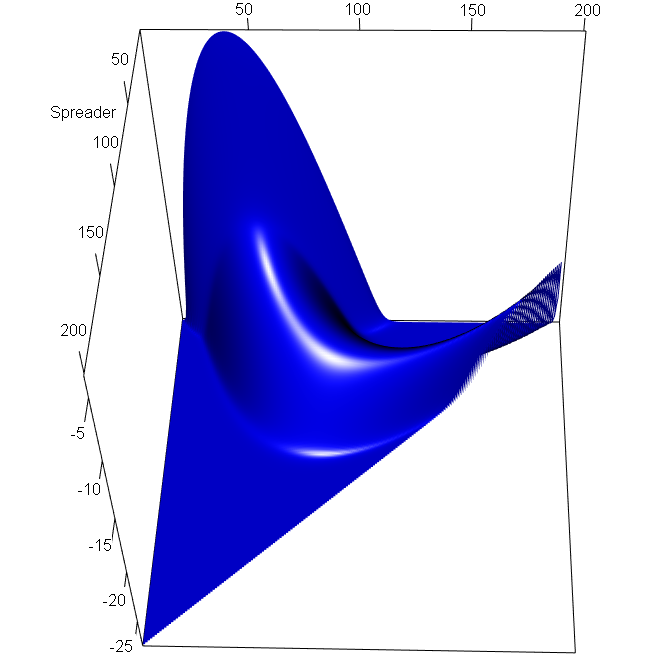}
    \caption{Log of the $\qso$ for the MT model with $(X_0=200, Y_0=1)$.}
    \label{Fig: Log_QSD}
\end{figure}

\subsection{RE-distribution for the MT model}
\label{SS: RE Rumor}

In the discussion presented by \textcite{artalejo10} about the QSD and the RE-distribution for the SIR epidemic model, the authors argue that, in general, it is possible to compare these two approaches for a scenario in which the spectral gap of the generator matrix $\QQ$ is bigger than the killing rate (that is, the distance between the first and second bigger eigenvalues of $\QQ$ is bigger than $\lambda$). However, the QSD for the SIR model has a trivial form, and it is possible to check that the killing rate is greater than or equal to the spectral gap.

The line of reasoning that \textcite{artalejo10} use to prove that the SIR model has a trivial QSD is analogous to that we employ for the MT model, relying on the results from \textcite{van08}. 
Regarding the discussion about the adequate scenario to compare QSD and RE, it seems that more should be taken into consideration. 
As we show in Theorem~\ref{T: QSD-MT-2}, changing the absorption set substantially affects the existence of a non-trivial QSD.
However, the spectral gap remains the same, and in both cases it is comparable to the killing rate, as $\lambda_2-\lambda_1 = 2N-N=N=\lambda$.
For these cases, \textcite{meleard12} explain that further investigations are needed, as the set of eigenvalues and eigenfunctions has an influence.

Consider the MT model starting from the state $(X_0, Y_0) = (N, 1)$.
Then, using \eqref{F: RE-FIC} in Definition~\ref{D: RE}, the RE-distribution is given by
\begin{equation}
\label{F: RE-MT}
    R(x,y) = \dfrac{E[T_{(x,y)}]}{E[T]},
\end{equation}
where
\begin{align*}
&T_{(x,y)} = \text{time that the process spends in state $(x,y)$},\\[0.1cm]%
&T = \sum_{(x,y) \in S} T_{(x,y)} = \text{total time for absorption}.
\end{align*}
We note that $R(x,y)$ given in \eqref{F: RE-MT} can be obtained recursively for a fixed $N$. 
The computations are straightforward and can be performed using mathematical software, as the process does not revisit previously visited states. 
For large $N$, we can use the asymptotic approximation for the mean time for absorption given by \textcite{svensson93}, according to which $E[T]=(2.68 \ln(N+1)+1.38)/N$.

Figure \ref{Fig: RE} shows the graph of RE-distribution for $N=200$. Naturally, this probability measure has a big peak at the beginning, since we have a small rate of change and high probabilities. Another peak can be observed near absorption, indicating paths with high probability. Again we have a curved path that has more probability mass.
Figure \ref{Fig: Log-RE} shows the log of the RE-distribution, where this can be seen more easily. This indicates that the process spends more time in the beginning, then it accelerates until it gets slower again near absorption.
We emphasize that QSDs and the RE-distribution are distinct measures designed to describe the long-term behavior of absorbing processes. 
Their comparison must take into consideration the proper scenarios, as previously pointed.

\begin{figure}[!htb]
    \centering
    \includegraphics[width=0.6\linewidth]{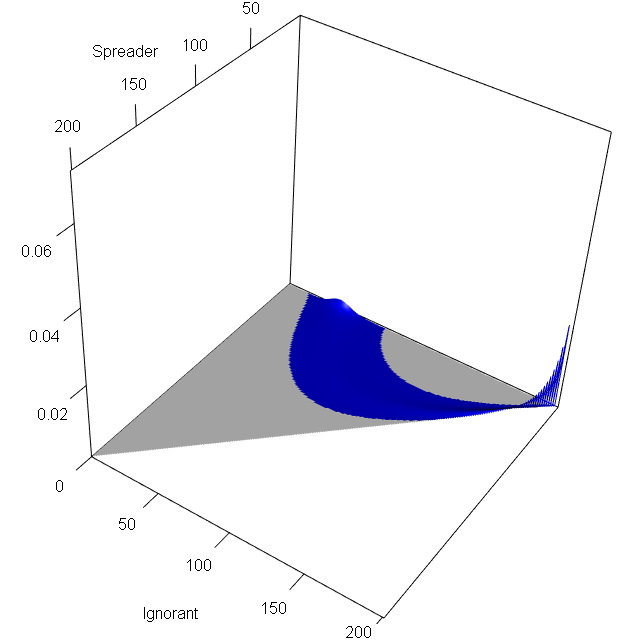}
    \caption{RE-distribution for the MT model with $(X_0=200,Y_0=1)$. Values smaller than $10^{-6}$ are omitted.}
    \label{Fig: RE}
\end{figure}

\begin{figure}[!htb]
    \centering
    \includegraphics[width=0.6\linewidth]{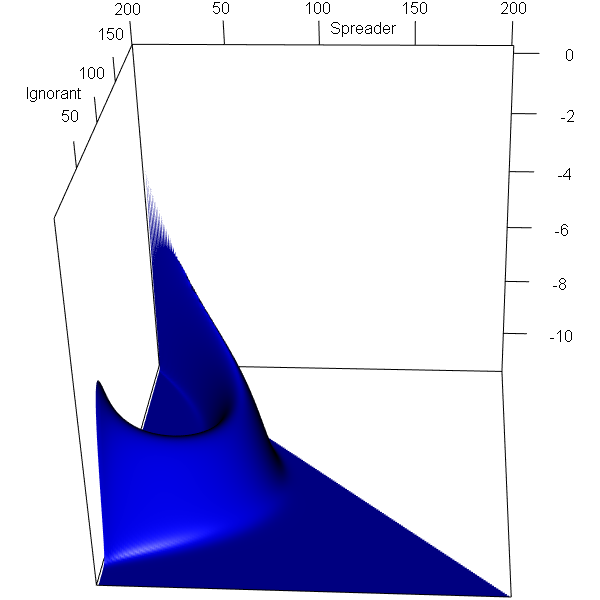}
    \caption{Log of RE-distribution for the MT model with $(X_0=200,Y_0=1)$.}
    \label{Fig: Log-RE}
\end{figure}

\section{DK model and  SIR model}
\label{S: SIR}

We end the discussion of this paper by analyzing the cases for the Daley--Kendall rumor model and the SIR epidemic model.
As we have pointed out, stochastic rumor processes and the SIR model remarkably differ in the way spreader/infected individuals are removed from the population.
However, many similarities exist: these processes are strictly evolutionary, conservative, and are absorbed in finite time. In this section, we discuss that the same strategy from Section \ref{SS: QSD Formula} can be applied to obtain a QSD for the DK model and the SIR model, but for the former, even if we adapt the absorbing set, there will exist a QSD in a very restricted setting, when the recovery rate is proportional to the population.
The proofs are analogous to the ones for the MT model and will be omitted.

\subsection{DK model}
\label{SS: QSD_DK}

Recall that, for the DK model, we also consider $X_t$, $Y_t$, and $Z_t$ representing the number of individuals in the ignorant, spreader, and stifler classes at time~$t$, respectively.
The population size is $N+1$, and the evolution is usually described by the continuous-time Markov chain $\{(X_t, Y_t)\}_{t \geq 0}$ with transition scheme~\eqref{F: DK Rates} and initial state $(N, 1)$. For this model, the total rate is given by
\begin{equation}
\label{F: DK_total_rate}
    XY+Y(N+1-X-Y)+\frac{Y(Y-1)}{2} = \frac{Y(2N+1)}{2}-\frac{Y^2}{2}.
\end{equation}

Note that \eqref{F: DK_total_rate} has its maximum at $Y+N/2$ and and since $Y \geq 0$, its minimum before absorption is reached at $Y=1$. Again we have
\begin{align*}
S^{\prime} &= \{ (x, y) \in \bbZ^2: 0 \leq x \leq N, 0 \leq y \leq N + 1 - x \},\\[0.1cm]%
\Delta &= \{(x, 0): x = 0, \dots, N - 1\},\\[0.1cm]%
S &= S^{\prime} \setminus \Delta.
\end{align*}
Setting the absorption time
\[ \tam = \inf\{t > 0: Y_t = 0\}, \]
we can use the same arguments from Section \ref{SS: QSDs Rumor} to show that the QSD for the DK model is trivial. Note that the last state before absorption is the minimal class that can be reached from all previous states. 
So we can obtain a result analogous to Theorem \ref{T: QSD-MT}, and the QSD for the DK model is given by $\nu = \delta_{(0,1)}$.

Again, the only way to obtain a non-trivial QSD is to modify the conditioning by treating the set of states with $y=1$ as absorbing once the process leaves the initial state $(N, 1)$. Suppose that $(X_0, Y_0) = (N, 1)$, and let
\begin{equation*}
    \Xi = \inf\{t>0: Y_t \neq Y_0\}
\end{equation*}
Then we define a new absorption time by
\begin{equation}
\label{F: abs_time_DK}
   \tam = \inf\{t > \Xi: Y_t = 1\},
\end{equation}
and the new set of transient states by
\begin{align*}
\ctm &= S \setminus \{(x, 1): x = 0, \dots, N - 1\}\\[0.1cm]%
 &= \{(N, 1)\} \cup \{ (x, y) \in \bbZ^2: 0 \leq x \leq N, 2 \leq y \leq N + 1 - x \}.
\end{align*}

We then derive a theorem analogous to Theorem \ref{T: QSD-MT-2}, where we only get a non--trivial measure if we condition to not return to the initial level. Otherwise, the QSD will be concentrated at a point.
The explicit formula for the DK model can be obtained similarly as before, with the difference that now we may have jumps of size two, so the number of paths is not related to the ballot problem. Since in this case paths to $(x,y)$ may have different sizes, we write $L_\gamma$ as the number of jumps of path $\gamma$. Definition \ref{D: Path} can be adapted for the DK model, where paths will have the possible transitions given by $\ctran = \{ (-1, 1), (0, -1),(0,-2) \}$.

Notice that again any path $\cam \in \ccam$ has $N-x$ transitions of type $(-1, 1)$, however, now the number of transitions of type $(0, -1)$ and $(0,-2)$ will depend on the path. Since we may have jumps of size two, notice that the number of jumps of this size is at least one, since the first time we lose spreaders must be a consequence of the encounter of two spreaders, considering we do not have any stifler yet, and the bigger number of jumps in this case is equal to \[\left\lfloor{\frac{z}{2}}\right\rfloor=\left\lfloor{\frac{N+1-x-y}{2}}\right\rfloor.\] For transitions of type $(0,-1)$, the size will be then \[\left\lceil{\frac{z}{2}}\right\rceil-\left\lceil{\frac{z-1}{2}}\right\rceil \ + \ 2j=\left\lceil{\frac{N+1-x-y}{2}}\right\rceil-\left\lceil{\frac{N-x-y}{2}}\right\rceil \ +\ 2j, \ j=0,\dots,\left\lfloor{\frac{N+1-x-y}{2}}\right\rfloor-1.\] 
Therefore, the sizes for paths to a fixed $(x,y)$ will be \[L_{\gamma} = N-x \ + \left\lfloor{\frac{N+1-x-y}{2}}\right\rfloor -j+ \left\lceil{\frac{N+1-x-y}{2}}\right\rceil-\left\lceil{\frac{N-x-y}{2}}\right\rceil+2j,\] 
for $j=0,\dots,\left\lfloor{\frac{N+1-x-y}{2}}\right\rfloor-1$. Then, since for $n \in \mathbb{Z}$ we have that $n=\lfloor\frac{n}{2}\rfloor+\lceil\frac{n}{2}\rceil$,
\begin{equation*}
   L_{\gamma} = 2N-2x-y+1 -\left\lceil{\frac{N-x-y}{2}}\right\rceil+j.
\end{equation*}
Now, adapting Definition \ref{D: Rates-Path}, define 
\begin{equation*}
 \rho_j  = 
\left\{
\begin{array}{cl}
x_j \, y_j &\text{if } v_{j+1} - v_j = (-1, 1),\\[0.2cm]
y_j (N +1 - x_j-y_j) &\text{if } v_{j+1} - v_j = (0, -1).\\[0.2cm]
y_j (y_j-1)/2 &\text{if } v_{j+1} - v_j = (0, -2).
\end{array}	\right.   
\end{equation*}
Finally, we can state the following equivalent theorem for the DK model.

\begin{thm}
\label{T: QSD_DK_3}
Consider the DK model with absorption time defined by~\eqref{F: abs_time_DK}. 
Then, the non-trivial QSD satisfying is given by
\begin{equation*}
\qsm(x,y) = 
\left\{
\begin{array}{cl}
C_N^{-1} &\text{if } (x, y) = (N, 1),\\[0.1cm]
C_N^{-1} \, \frac{\lambda_0}{\lambda_1-\lambda_0} &\text{if } (x, y) = (N-1, 2),\\[0.1cm]
C_N^{-1} \displaystyle{\sum_{\cam \in \ccam}\frac{\lambda_0}{\lambda_L-\lambda_0} \prod_{j=1}^{L_\gamma-1} \dfrac{\rho_j}{\lambda_{j} - \lambda_0}} &\text{otherwise},
\end{array}	\right. 
\end{equation*}
where $C_N$ is the normalizing constant defined in such a way that
\[ \sum_{(x, y) \in \ctm} \qsm(x,y) = 1. \]
\end{thm}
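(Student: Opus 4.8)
The strategy is entirely parallel to the proof of Theorem~\ref{T: QSD_MT_3}, so I would begin by recording the two structural facts about the DK model that make that argument go through verbatim. First, by~\eqref{F: DK_total_rate} the total rate at a transient state $(x,y)$ equals $Y(2N+1)/2 - Y^2/2$, which is increasing in $Y$ on the relevant range $Y \geq 1$, so its unique minimum \emph{before absorption} is attained on the level $y=1$; after the modification~\eqref{F: abs_time_DK} the process visits the level $y=1$ only at the initial state $(N,1)$, hence $\lambda_0 = N$ (the total rate at $(N,1)$) is strictly smaller than the total rate $\lambda_j$ at every subsequently visited state. Second, the DK model is still strictly evolutionary on the new transient set $\ctm$, and the partial order on the singleton classes makes $S_{a(\lambda)} = \{(N,1)\}$ with every state of $\ctm$ accessible from it, so Theorems~\ref{thm_reduc_1} and~\ref{thm_reduc_2} apply and give a unique non-trivial QSD $\qsm$ with $\qsm(x,y)>0$ for all $(x,y)\in\ctm$.

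With those preliminaries in place I would run the same path-decomposition computation. Fixing $(x,y)\in\ctm$ with $(x,y)\notin\{(N,1),(N-1,2)\}$, I would write the event $\{X_t=x,Y_t=y,\tam>t\}$ as the disjoint union over the paths $\cam\in\ccam$ (now with steps in $\ctran=\{(-1,1),(0,-1),(0,-2)\}$ and possibly different lengths $L_\cam$) of the event that the embedded chain follows $\cam$, the holding times at $v_0,\dots,v_{L_\cam-1}$ sum to at most $t$, and the holding time at $v_{L_\cam}$ exceeds the remainder. Conditioning on the embedded path contributes the product of one-step probabilities $\rho_j/\lambda_j$, and convolving the exponential holding times $I_j\sim\DExp(\lambda_j)$ lets me invoke Lemma~\ref{L: lema-sum-exp} with distinguished smallest rate $\lambda_0=N$: the density of $S_{L_\cam-1}$ is asymptotically $\pi_{L_\cam-1}e^{-\lambda_0 t}(1+o(t))$ with $\pi_{L_\cam-1}=\lambda_0\prod_{j=1}^{L_\cam-1}\lambda_j/(\lambda_j-\lambda_0)$. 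Integrating against $P(I_{L_\cam}>t-s)$ exactly as in Theorem~\ref{T: QSD_MT_3}, the factors $\lambda_j$ in $\pi_{L_\cam-1}$ cancel against the $\lambda_j$ appearing in the one-step probabilities $\rho_j/\lambda_j$, leaving
\[
P(X_t=x,Y_t=y,\tam>t)\sim e^{-\lambda_0 t}(1+o(t))\sum_{\cam\in\ccam}\frac{\lambda_0}{\lambda_{L_\cam}-\lambda_0}\prod_{j=1}^{L_\cam-1}\frac{\rho_j}{\lambda_j-\lambda_0}.
\]
Summing over $\ctm$ gives $P(\tam>t)\sim C_N e^{-\lambda_0 t}(1+o(t))$ for a constant $C_N$, and dividing yields the stated formula; the two special cases $(N,1)$ and $(N-1,2)$ are read off directly (the first is the empty product, the second the single one-step path). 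Since the initial distribution is $\delta_{(N,1)}$, Theorem~\ref{T: QSD-MT-2}'s DK analogue identifies this Yaglom limit with the unique non-trivial QSD.

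The one genuine point of care — as the text already flags — is that paths to a given $(x,y)$ now have \emph{variable} length $L_\cam$, because each $(0,-2)$ step removes two spreaders at once. This forces the combinatorial factor to no longer be the ballot number and, more importantly for the proof, means I must apply Lemma~\ref{L: lema-sum-exp} separately on each path with its own $L_\cam$ before summing, and check that the dominant exponent $e^{-\lambda_0 t}$ is the same across all paths — which it is, precisely because $\lambda_0=N$ is the total rate only at $(N,1)$ and every other visited state has strictly larger total rate, so $\lambda_0<\lambda_j$ holds uniformly along every path. The remaining obstacle is the same one noted after Theorem~\ref{T: QSD_MT_3}: the formula is a sum over a path set that has no simple closed form, so no further simplification of $\qsm$ (beyond the analogue of the ``never lose a spreader'' special case, where only $(0,-2)$ steps could occur and the single-path count collapses) should be expected. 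I would therefore present the theorem with the path-sum formula as the final form and omit, as stated, the routine details that mirror the MT proof.
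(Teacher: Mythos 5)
Your proposal follows essentially the same route as the paper, which explicitly omits this proof as analogous to the MT case: uniqueness, positivity and the Yaglom-limit identification come from Theorems~\ref{thm_reduc_1}--\ref{thm_reduc_2}, and the formula from the path decomposition plus Lemma~\ref{L: lema-sum-exp}, applied per path with its own length $L_\gamma$. One small slip worth fixing: the DK total rate $Y(2N+1)/2 - Y^2/2$ is a downward parabola with maximum near $Y=N+\tfrac12$, not increasing on all of $Y\ge 1$; the fact you actually need, $\lambda_0=N<\lambda_j$ for every state with $y\ge 2$, follows instead by concavity and checking the endpoints $Y=2$ and $Y=N+1$ (valid for $N\ge 2$), which is exactly the paper's remark about the location of the maximum and minimum.
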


As before, the QSD is given by a recurrent formula, as a function of the rates. Since now the paths have different sizes, the analysis is trickier than for the Maki--Thompson case. Note that 
\[\lambda_j-\lambda_0=\frac{Y(2N+1)}{2}-\frac{Y^2}{2}-N=\frac{(Y-1)(2N-Y)}{2}.\]
Let us analyze the value of $\qsm(x, y)$ with $y=N+1-x$, so that $L = y-1$ and $\rho_j = x_jy_j, \ j=0,\dots,L_\gamma$. Then,
\begin{align*}
  &\qsm(x,N+1-x) = \frac{\lambda_0}{\lambda_{L}-\lambda_0} \frac{\rho_1}{\lambda_1-\lambda_0}\frac{\rho_2}{\lambda_2-\lambda_0}\dots \frac{\rho_{L-2}}{\lambda_{L-2}-\lambda_0}\frac{\rho_{L-1}}{\lambda_{L-1}-\lambda_0}\\
  &= \frac{2N}{(y-1)(2N-y)} \cdot\frac{2\cdot2(N-1)}{(2N-2)}\cdot\frac{2\cdot3(N-2)}{2(2N-3)}\cdot\frac{2\cdot4(N-3)}{3(2N-4)}\dots\cdot\frac{2(y-1)(N-y+2)}{(y-2)(2N-y+1)}\\
    &=2^{N-x+1}\frac{N!}{(N-y+1)!} \frac{(2N-y-1)!}{(2N-2)!}= 2^{N-x+1}\frac{N!}{x!} \frac{(N+x-2)!}{(2N-2)!}.
\end{align*}
This can be rewritten as
\begin{equation}
\label{F: qsm-approx}
    \qsm(x,N+1-x)=\frac{2^{2N}N!(N-2)!}{(2N-2)!} \binom{x+N-2}{x}
    \left(\frac{1}{2}\right)^x\left(\frac{1}{2}\right)^{N-1},
\end{equation}
where $\binom{x+N-2}{x} \left(\frac{1}{2}\right)^x\left(\frac{1}{2}\right)^{N-1}$ is the probability mass function at the point $x$ of a Negative Binomial random variable with parameters $r=N-1$ and $p=1/2$. Using Stirling's approximation for $N$ large, we see that
\begin{equation*}
   \frac{2^{2N}N!(N-2)!}{(2N-2)!} = \frac{2N-1}{N-1} \cdot \frac{2^{2N+1}N!N!}{(2N)!} \sim \frac{2N-1}{N-1} \cdot 2\sqrt{\pi N}.
\end{equation*}
Regarding the second factor in~\eqref{F: qsm-approx}, we observe that, for large enough $N$, it is approximately equal to the probability that a $\DNormal(N-1,2N-2)$ random variable lies in the interval $[x-1/2, x+1/2]$. 
Therefore,
\begin{align*}
  \qsm(x,N+1-x) &\sim \frac{2N-1}{N-1} \cdot 2\sqrt{\pi N}\cdot\frac{1}{\sqrt{4\pi (N-1)}}\exp\left\{\frac{-(x-N+1)^2}{4(N-1)}\right\}\\
  &\sim\frac{2N-1}{N-1}\sqrt{\frac{N}{N-1} }\exp\left\{\frac{-(x-N+1)^2}{4(N-1)}\right\}.   
\end{align*}
As in the case of the MT model, we see that the probability of staying on this path approaches zero really fast. Figure~\ref{Fig: DK_50} shows a plot for the QSD for the DK model with $N=50$. In this case, the distribution seems to be more concentrated close to the end of the process, and the curved shape is not observed.
\begin{figure}[!htb]
    \centering
    \includegraphics[width=0.8\linewidth]{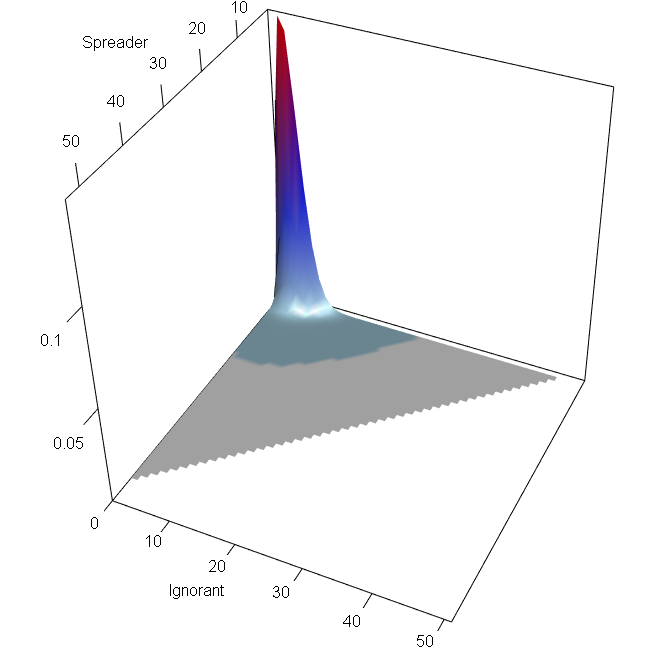}
    \caption{QSD $\qsm$ for the DK model with $(X_0=50, Y_0=1)$. Values smaller than $10^{-10}$ are omitted.}
    \label{Fig: DK_50}
\end{figure}

\subsection{SIR model}
\label{SS: QSD_SIR}
Now let us consider the Markovian SIR epidemic model, also known as the \emph{general stochastic epidemic}, which was introduced by \textcite{bartlett49}.
We denote by $S_t$, $I_t$, and $R_t$ the number of susceptible, infected, and removed individuals at time $t \geq 0$.
Considering a population of size $N+1$, we have that $S_t + I_t + R_t = N+1$ for all $t \geq 0$.
The system evolves as a continuous-time Markov chain $\{(S_t, I_t)\}_{t \geq 0}$ with the following rates:
\begin{equation}
\label{F: SIR_rates}
\begin{array}{cc}
\text{transition} \quad & \text{rate}\\[0.1cm] 
(-1,1) \quad & \beta S I\\[0.1cm]
(0,-1) \quad & \mu I.
\end{array}     
\end{equation}
The first transition corresponds to an infection, and the second transition represents a removal; $\beta > 0$ and $\mu > 0$ are the infection and removal rates, respectively.
States $(s, i)$ with $i=0$ are absorbing.
For more details on this model and other stochastic processes for the spread of an infectious disease, we refer the reader to \textcite{allen03,andersson00,daley99}.

We observe that the total rate at the state $(S, I)$ is $I(\beta S+\mu)$.
Furthermore, as proved by \textcite{artalejo10}, all probability mass for the QSD is concentrated at the state $(1, 0)$. They do not discuss it further, but the arguments to show this are the same ones used on rumor processes.
Next, we attempt to adapt this process to find a non-trivial QSD; however, the fact that individuals recover independently slightly alters the approach.

We first change the absorbing set in order to get a minimal class that accesses all other states, as done in \eqref{F: abs_time}. Then, suppose that $(S_0, I_0) = (N, 1)$, and let
\begin{equation*}
    \hat{\Xi} = \inf\{t>0: I_t \neq I_0\}
\end{equation*}
be the first jump epoch of the SIR model.
Then we define a new absorption time by
\begin{equation}
\label{F: abs_time_SIR}
   \tasir = \inf\{t > \Xi: I_t = 1\},
\end{equation}
and the new set of transient states by
\begin{equation*}
\ctmsir = S \setminus \{(x, 1): x = 0, \dots, N - 1\}.
\end{equation*}

Note that we want $\lambda$ maximal to be associated with the state $(S_t =N,I_t=1)$ so all other states are accessible from the minimal class, as done in Section \ref{S: QS Rumor}. In this case, the rate of change associated with the minimal class should be $$I(\beta S+\mu)=\beta N+\mu.$$ 
However, note that the rates of change are minimal when $S=0$. Then, if we condition to not return to $I=1$, the minimal rate will be equal to $\lambda=2\mu$, causing the QSD to be concentrated at the point $(S=0,I=2)$. This implies that to get a non-trivial QSD for the SIR model using the same strategy as before, we need that
\begin{equation*}
    2 \mu>\beta N+\mu \Longrightarrow \mu > \beta N.
\end{equation*}


This means that the rate of recovery $\mu$ needs to be much higher than the infection rate, and to be proportional to the size of the population for the QSD to be non-trivial. We summarize this in the following theorem.

\begin{thm}\label{T: QSD_SIR}
    Consider the hitting time defined in \eqref{F: abs_time_SIR} for the SIR epidemic model with rates \eqref{F: SIR_rates}. Then there is a non-trivial measure $\qsir$ such that
\begin{equation*}
 P_{\qsir}(S_t=s,I_t=i \dado \tasir >t)=\qsir(s,i),
\end{equation*}
if, and only if, $\mu > \beta N$. In this case, $\qsir(s,i)>0$ for all transient states. This QSD is the limiting measure
\begin{equation*}
  \lim_{t \to \infty}P_{(N,1)}(S_t=x,I_t=y)=\qsir(s,i)  
\end{equation*}
when $\mu=\delta_{(N,1)}$ is the initial distribution. For any other initial distribution $\rho$, if $\rho(N,1)=0$,
\begin{equation*}
  \lim_{t \to \infty}P_{\rho}(X_t=x,I_t=y)=\delta_{(0,2)}.  
\end{equation*}
\end{thm}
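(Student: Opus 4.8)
The plan is to follow the template of Theorems~\ref{T: QSD-MT} and~\ref{T: QSD-MT-2}, feeding the structure of the modified SIR chain into the reducible-chain results of~\cite{van08} recalled in Section~\ref{SS: reducible}. Since the SIR model is strictly evolutionary on a finite state space, every communicating class is a single state $(s,i)$, and the block of $\QQ$ attached to it is the $1\times1$ matrix whose entry is $-i(\beta s+\mu)$. Hence the eigenvalues of $\QQ$ are exactly the numbers $-i(\beta s+\mu)$ over transient states $(s,i)\in\ctmsir$, and $-\lambda$, the eigenvalue of maximal real part, corresponds to the transient state(s) of minimal total rate. The partial order $\prec$ is again compatible with $S$ being nonincreasing and, off the absorbing level, with $I$ being nonincreasing, so the classes can be enumerated as $S_1\prec S_2\prec\dots\prec S_L$ exactly as for the MT model.

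The second step is the rate comparison. On $\ctmsir$ the total rate equals $\beta N+\mu$ at the single state $(N,1)$, whereas at every transient state with $I\ge2$ it is at least $2\mu$, with equality only at $(0,2)$. Thus, if $\mu>\beta N$ then $\lambda=\beta N+\mu$, the unique minimal class is $\{(N,1)\}$, so $\card(I(\lambda))=1$ and $-\lambda$ has geometric multiplicity one, with $S_{a(\lambda)}=\{(N,1)\}$. If $\mu\le\beta N$ then $\lambda=2\mu$ and $S_{a(\lambda)}=\{(0,2)\}$: this class is the unique minimal one when $\mu<\beta N$, and when $\mu=\beta N$ it ties with $\{(N,1)\}$, but the two are linearly ordered since $(0,2)\prec(N,1)$, so~\cite[Theorem~6]{van08} still gives geometric multiplicity one and $a(\lambda)$ selects the class $(0,2)$, which is accessible from the other.

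With the geometric-multiplicity-one hypothesis available in all cases, I would invoke Theorem~\ref{thm_reduc_1}. When $\mu>\beta N$, every state of $\ctmsir$ is accessible from $(N,1)$: a target $(s,i)$ with $i\ge2$ is reached by performing the $N-s$ needed infections and then the required removals along a path that, apart from its initial state, stays at $I\ge2$ and hence inside $\ctmsir$; so the unique QSD $\qsir$ assigns positive mass to every transient state and is non-trivial. When $\mu\le\beta N$, no transient state is accessible from $(0,2)$ — its infection rate vanishes and its only transition, a removal, lands in the absorbing state $(0,1)$ — so the QSD is $\delta_{(0,2)}$. This is precisely the claimed equivalence. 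For the limiting statements: from the initial distribution $\delta_{(N,1)}$ the set $S_{a(\lambda)}=\{(N,1)\}$ is trivially accessible, so Theorem~\ref{thm_reduc_2} gives convergence of $P_{(N,1)}(\,\cdot\mid\tasir>t)$ to $\qsir$; for $\rho$ with $\rho(N,1)=0$ the support of $\rho$ lies in $\{(s,i):i\ge2\}$, from which $(N,1)$ is unreachable because $I\ge2$ forces $S\le N-1$ and $S$ never increases, so one restricts to the subchain generated by $\rho$, whose maximal-real-part eigenvalue is $-2\mu$, associated with $(0,2)$ (reachable from every state with $I\ge2$), and the argument in the proofs of Theorems~\ref{T: QSD-MT} and~\ref{T: QSD-MT-2} yields the Yaglom limit $\delta_{(0,2)}$.

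The step I expect to demand the most care is the accessibility bookkeeping in the modified chain: verifying that every transient state really is reachable from $(N,1)$ by a path avoiding the now-absorbing level $I=1$, that $(N,1)$ is genuinely unreachable once $I\ge2$, and especially the degenerate value $\mu=\beta N$, where two classes share the maximal eigenvalue and their linear orderedness must be invoked to retain uniqueness. Once the structure is pinned down, locating the eigenvalues and quoting~\cite{van08} is routine; additionally, an explicit path formula for $\qsir$ when $\mu>\beta N$ can be obtained by repeating the computation in the proof of Theorem~\ref{T: QSD_MT_3} with $\lambda_j=i_j(\beta s_j+\mu)$ and the corresponding $\rho_j$.
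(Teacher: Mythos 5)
Your proposal is correct and takes essentially the same route as the paper, which omits the proof as analogous to the MT case after the rate comparison $\beta N+\mu$ versus $2\mu$: like the paper, you view each state as a singleton class, locate the maximal eigenvalue via the minimal total rate on $\ctmsir$, check accessibility of all transient states from $(N,1)$, and invoke Theorems~\ref{thm_reduc_1} and~\ref{thm_reduc_2} exactly as in Theorems~\ref{T: QSD-MT} and~\ref{T: QSD-MT-2}. Your explicit handling of the boundary case $\mu=\beta N$ through the linear ordering $(0,2)\prec(N,1)$ of the two maximal classes is a detail the paper leaves implicit, and it correctly gives the trivial QSD $\delta_{(0,2)}$ there, consistent with the strict inequality in the statement.
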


\begin{minipage}{1\textwidth}
The proof is analogous to the proof of Theorem \ref{T: QSD_MT_3}. Individuals recovering independently in this model is the crucial difference that may make it inadequate for the use of QSDs, since the scenario that allows us to get a QSD is very restricted. The RE-distribution is possibly a better approximation for a long-term behavior, as discussed in \textcite{artalejo10}. 
The explicit QSD for this model can be found similarly to the MT model in Section \ref{SS: QSD Formula}.
\end{minipage}

\printbibliography

\end{document}